\PassOptionsToPackage{dvipsnames}{xcolor}

\documentclass[11pt,authoryear,elsevier,numafflabel]{elsarticle}

\usepackage{ulem}
\usepackage{xcolor}
\usepackage{setspace}
\usepackage{pdflscape}
\usepackage[round]{natbib}
\usepackage{amsmath}
\usepackage{mathtools}
\usepackage{mathrsfs}
\usepackage{amssymb}
\usepackage{bbold}
\usepackage{booktabs}
\usepackage{multirow}
\usepackage{multicol}

\DeclareMathOperator*{\argmin}{argmin}

\usepackage{amsthm}
\newtheorem{theorem}{Theorem}[section]

\newtheorem*{remark}{Remark}
\newtheorem{corollary}{Corollary}[theorem]

\usepackage{algpseudocode}
\usepackage{algorithm}

\usepackage[english]{babel}
\usepackage{csquotes}
\usepackage[T1]{fontenc}
\usepackage[colorlinks=true, allcolors=blue]{hyperref}
\usepackage{subcaption}
\usepackage{fancyhdr}
\usepackage[toc,page]{appendix}

\usepackage{parskip}
\usepackage{a4wide}

\usepackage{tikz}
\usetikzlibrary[arrows.meta]
\usetikzlibrary {graphs,positioning,shapes.geometric,quotes}
\usetikzlibrary {datavisualization.formats.functions} 
\usetikzlibrary{decorations.pathreplacing}

\title{Solution of a bilevel optimistic scheduling problem on parallel machines}
\date{}

\author[aff1,aff2]{Quentin Schau\corref{cor1}}
\author[aff1]{Olivier Ploton}
\author[aff1]{Vincent T'kindt}
\author[aff4]{Han Hoogeveen}
\author[aff2,aff3]{Federico Della Croce}
\author[aff4]{Jippe Hoogeveen}

\address[aff1]{University of Tours, LIFAT, France.}
\address[aff2]{Politecnico di Torino, DIGEP, Torino, Italy.}
\address[aff3]{CNR, IEIIT, Torino, Italy}
\address[aff4]{Utrecht University, Department of Information and Computing Sciences, Netherlands.}
\cortext[cor1]{Corresponding author. \\ E-mail addresses: quentin.schau@univ-tours.fr; quentin.schau@polito.it (Q. Schau)}
\begin{document}

\begin{abstract}
	We consider the uniform parallel machines scheduling problem in the context of optimistic bilevel optimization, where two speed options are considered. In this scenario, the leader aims to minimize the weighted number of tardy jobs, while the follower seeks to minimize the total completion time on a set of uniform machines. This problem has practical applications in Industry 4.0. We show that this problem is $\cal NP$-hard in the strong sense by providing a reduction from the Numerical 3-Dimensional Matching problem and we provide a moderately exponential-time dynamic programming algorithm. The problem is solved by means of a concise MIP formulation and a branch-and-bound algorithm that embeds a column generation approach for the lower bound computation. Computational experiments are presented for instances with up to $80$ jobs and $4$ machines while larger problems are out of reach for the proposed approaches.
\end{abstract}

\begin{keyword}
	Bilevel scheduling \sep Exact algorithms \sep Complexity results.
\end{keyword}

% Change bullet itemize puce
\def\labelenumi{({\roman{enumi}})}

\maketitle

\section{Introduction}

Bilevel scheduling problems model the situation in which two agents interact to schedule a set of jobs, but under a specific hierarchical setting. The first agent, called the leader, take part of the scheduling decisions to minimize its objective function $f^L$. Next, the second agent, called the follower, takes the remaining decisions to minimize its objective function $f^F$. The particularity of such problems is that these two objective functions can only be evaluated when all scheduling decisions have been taken. Bilevel scheduling is a challenging and emerging research topic that relates to bilevel optimization. The literature on such problems is rich and has focused on both continuous linear and non-linear problems \citep{bardPracticalBilevelOptimization1998, dempeFoundationsBilevelProgramming2002,dempeBilevelOptimizationTheory2020}. It appears that, when the leader has taken his decisions, the follower's problem may have multiple optimal solutions: this leads to different classes of bilevel problems. In optimistic ($OPT$) bilevel problems, the follower returns the solution that leads to the smallest value of the leader's objective function among all optimal solutions. In pessimistic ($PES$) bilevel problems, the follower returns the optimal solution that is the worst for the leader's objective function. Finally, in adversarial ($ADV$) bilevel problems, the leader takes decisions to ensure that the optimal solution of the follower's problem is the worst possible. 

To the best of our knowledge, the literature on bilevel scheduling is relatively limited. To solve a bilevel flow shop problem, \cite{karlofBilevelProgrammingApplied1996} proposed an exact two level branch-and-bound algorithm, where the leader assigns the operators to the machines in order to minimize the total flowtime, while the follower decides on the jobs' schedule in order to minimize the makespan. The considered approach was limited to instances with up to $10$ jobs.
\cite{abassBilevelProgrammingApproach2005a} studied an extension of this problem by considering fuzzy processing times.
\cite{kasemsetBilevelMultiobjectiveMathematical2010a} proposed a bilevel scheduling multi-objective job shop problem that operates under the Theory of Constraints (TOC) policy. It was extended in \cite{kasemsetPSObasedProcedureBilevel2012} who proposed a heuristic approach based on Particle Swarm Optimization. In this problem, the leader aims to minimize idle time on bottleneck machines before the follower, that schedules jobs in order to minimize the weighted aggregating function, i.e., the makespan, the maximum tardiness and the maximum earliness.
\cite{brownComplexityDelayingAdversarys2005} considered a bilevel project scheduling problem where the leader takes decisions in order to maximize the completion time of the follower. We can see this problem as an adversarial setting. They investigated the computational complexity of variants of the problem.
\cite{lukacProductionPlanningProblem2008} proposed a heuristic based on Tabu search to solve a bilevel scheduling problem, where the objective of the leader is to assign the products to the machines to minimize the total sequence dependent setup time, while the objective of the follower is to minimize the production, storage and setup costs of the machine.
\cite{kovacsConstraintProgrammingApproach2011a} used an exact constraint programming approach to model and solve discrete bilevel problems, and they applied it on optimistic bilevel single machine problems in which the leader selects the set of jobs the follower next schedules. Their approach failed finding the optimal solution already for several instances with $25$ jobs.
Following the three-field classification scheme (\cite{grahamOptimizationApproximationDeterministic1979} extended by \cite{tkindtSingleMachineAdversarial2024}), this problem is denoted by $1|OPT-n,r_j,\tilde{d_j}| \sum_j w_j^L R_j, \sum_j w_j^F C_j^F$, with $OPT-n$ meaning that the leader selects $n$ jobs in an optimistic setting. Moreover, the Boolean variable $R_j=1$ means that job $J_j$ is rejected by the leader, who minimizes the total cost of the rejected jobs $\sum_j w_j^L R_j$, after which the follower schedules the jobs in order to minimize the weighted sum of completion times. This problem was shown weakly $\cal{NP}$-hard \citep{kisBilevelMachineScheduling2012a}.
Furthermore, \cite{kisBilevelMachineScheduling2012a} studied the two versions \textit{optimistic} and \textit{pessimistic} of the bilevel scheduling problem: the $P|OPT-A_k|\sum_j^L C_j^L, \sum_j w_j^F C_j^F$ and $P|PES-A_k|\sum_j^L C_j^L, \sum_j w_j^F C_j^F$ problems, where $A_k$ is a set of jobs assigned to any machine $k$ by the leader, while the follower sequences jobs on each machine. These problems are shown to be strongly $\cal{NP}$-hard.
\cite{konurAnalysisDifferentApproaches2013a} studied scheduling of inbound trucks at the inbound doors of a cross-dock facility under truck arrival time uncertainty. A Genetic Algorithm was proposed to solve both \textit{optimistic} and \textit{pessimistic} bilevel problems. The leader's problem is the minimization of the expected total service time of the cross-dock operator while the follower's problem is the minimization or maximization, regarding the setting, of the expected total waiting time by determining the expected arrival times.
\cite{biancoGridSchedulingBilevel2015} proposed a bilevel scheduling problem occurring in grid computing. The leader assigns jobs to the follower aiming to minimize the total cost for rejecting or delaying tasks while the follower wants to maximize the computational resource usage efficiency. They also proposed an exact algorithm based on a bilevel programming formulation, as well as, a reformulation as a single-level Mixed Integer Programming (MIP) by using Karush-Kuhn-Tucker (KKT) optimality conditions. This MIP is often referred to as a Mathematical Program with Equilibrium Constraints (MPEC) \citep{dempeAnnotatedBibliographyBilevel2003}. Finally, the paper proposes a heuristic algorithm to solve large size instances.
\cite{tkindtSingleMachineAdversarial2024} studied several bilevel scheduling problems, with one machine and the adversarial setting. The paper considers jobs selection and data modification for the leader's decision and several usual objective functions from the scheduling literature. They provided some complexity proofs as well as exact polynomial-time algorithms. At the current state of the art, these problems appear very challenging and exact solution approaches can just handle instances very small in size.

This study focuses on a scenario with one leader and one follower composed of uniform parallel machines operating at two speeds. We consider a periodic decision-making framework with a time horizon $T$ to account for dynamic effects. The leader minimizes the weighted number of tardy jobs, whereas the follower's objective is to minimize the sum of completion times. From a bilevel perspective, we consider an optimistic case where the follower returns a schedule that achieves the minimum number of weighted tardy jobs among all optimal schedules for the total completion time.\\
Such a problem may occur in the context of the Industry of the Future in which the manufacturing system is composed of autonomous cyber-physical systems (CPS) interacting with each other in a hierarchical way. Thus, the problem is to dispatch production orders to these CPS. In such an organization, the leader is the system and the followers are the CPS. The integration of maintenance in the scheduling process may lead to setup the speed to potentially defective machines to its lowest value while healthy machines work at their fastest speed.
%also known as the Industry 4.0 (I4.0). I4.0 is an emerging industry that leverages technologies such as interconnected objects and cyber-physical systems (CPS). A manufacturing system in this context consists of multiple CPS interacting with each other. However, these CPS operate autonomously, making decisions based on their own interests. The integration of a centralized system (the manufacturing system) with decentralized systems (the CPS) allows for the incorporation of maintenance issues into the decision-making process. To illustrate this, consider that the leader has three options at the beginning of each scheduling period: (1) the speed of healthy machines is set to the maximum speed, (2) the speed of machines that are likely to be defective in a short time period, is set to the minimum speed to limit the risk of breakdowns, (3) defective machines are removed from the list of available resources for the follower and undergo maintenance.

Our contributions focus on two main aspects. First, we present several complexity results and, in particular, we establish that the considered bilevel scheduling problem is strongly
$\mathcal{NP}$-hard. 
To this extent, we introduce a MIP formulation with a polynomial number of variables and constraints that rules out the possibility
 that the bilevel problem belongs to the second level of the polynomial hierarchy \citep{woegingerTroubleSecondQuantifier2021} and provide a polynomial-time 
reduction from the Numerical 3-Dimensional Matching problem which is known to be 
$\mathcal{NP}$-complete in the strong sense. 
Next, we present a moderately exponential-time exact dynamic programming procedure and a dedicated branch-and-bound algorithm embedding a column generation scheme for the computation of the lower bound. Experimental results reveal that the bilevel problem is extremely challenging to solve and only instances 
with up to $80$ jobs and $4$ machines can be attacked.
The remainder of this paper is structured as follows. \href{sec:DefAndPPt}{Section 2},  presents structural properties and a MIP formulation as well as complexity results and a dynamic programming formulation. \href{sec:Exact_algorithm}{Section 3} focuses on the branch-and-bound algorithm. \href{sec:ExperRes}{Section 4}, reports on computational experiments conducted using randomly generated instances. Finally,  \href{sec:CCL}{Section 5} draws conclusions and outline potential directions for future research.

\section{Properties, modeling and complexity issues}
\label{sec:DefAndPPt}

We assume that the leader has taken his maintenance decisions, i.e., changed the floor shop configuration: we have $m_{1}$ machines with high speed denoted by set $\mathcal{V}_{1}$, and $m_{0}$ machines with low speed denoted by set $\mathcal{V}_{0}$. 
Correspondingly, let have $m=m_1+m_0$ machines. A set $\mathcal{J}$ of $N$ jobs is given among which $n \leqslant N$ have to be scheduled, with $n$ a given value. Each job $J_j$ is characterized by its processing time $p_j$, weight $w_j$, and due date $d_j$. Thus, the leader selects a subset $\mathcal{I} \subset \mathcal{J}$ of $n$ jobs and assigns them to the follower who, next, schedules them on the machines. The leader aims to minimize the number of weighted tardy jobs $\sum_j w_j U_j^L$ whereas the follower aims to minimize the sum of completion times $\sum_j C_j^F$. Let $\mathfrak{S}_{\mathcal{I}}$ be the set of machine schedules that contain $n$ jobs from $\mathcal{I}$, and let $C_j^F(s)$ be the completion time of job $J_j$ in machine schedule $s$.

Then, we define $U_j^L(s)=\begin{cases}
		1 \text{ if } C_j^F(s) > d_j \\
		0 \text{ otherwise}
	\end{cases}$. In the follower's problem, the optimistic scenario implies that there is a lexicographical objective function, since we want the schedule that minimizes the leader's objective function among all optimal schedules for the follower's objective function. Consequently, the follower minimizes $Lex \left( {\sum_j C_j^F}, {\sum_j w_jU_j^L} \right) $. The bilevel problem can be formulated as:

\begin{equation} \label{eq:bilevel_problem}
	\begin{aligned}
		 & \min_{\substack{\mathcal{I} \subset \mathcal{J}                                                                                                        \\ |\mathcal{I}|=n}} \hspace{2mm} \sum_{j \in \mathcal{I}} w_jU_j^L(s^*)  \\
		 & \text{st.}                                                                                                                                             \\[-6.5mm]
		 & \hspace{10mm} s^* \in \argmin_{s \in \mathfrak{S}_{\mathcal{I}}} \left( Lex \left( \sum_{j \in s} C_j^F(s), \sum_{j \in s} w_jU_j^L(s) \right) \right) \\
	\end{aligned}
\end{equation}

Correspondingly, we denote this bilevel problem by $Q|V_i \in \{V_{0},V_{1}\},OPT-n|\sum_j C_j^F,\sum_j w_jU_j^L$. The related follower's problem is denoted by $Q | V_i \in \{V_{0},V_{1}\}| Lex \left( \sum_j C_j,\sum_j w_jU_j \right)$. For the rest of the paper, we assume that all jobs are sorted by Shortest Processing Time (SPT) rule, i.e., $p_1 \leqslant p_2 \leqslant \hdots \leqslant p_N$. In case of equal-size jobs, i.e., jobs with same processing times, we sort them according to the Earliest Due Date (EDD) rule.

% --------------------------
%  -------------------------
% 
%	 Block structure	
% 
% -------------------------- 
% --------------------------

\subsection{Block structure}\label{subsec:BlockStruct}

In this section, we consider structural properties for the $Q || \sum_j C_j$ problem. We know from \cite{conwayTheoryScheduling1967b} that the problem can be solved in $\mathcal{O}(n \log n)$ time using the Shortest Processing Time job on the First Available Machine (STP-FAM) rule. The follower's problem is a special case where there are only two different speeds. This implies a special property for optimal schedules that is obtained by adapting the proof of \cite{conwayTheoryScheduling1967b}.

Given $n$ jobs and $m$ machines $M_i$ with a speed $V_i$, the processing time of a job $J_j$ on the machine $M_i$ is $p_j/V_i$. Our objective is to minimize $	\sum_j C_j$.
We represent a schedule by a non-rectangular and two-dimensional list that is a function of the machine number $i$ and of the chronological order $k$. We denote by $J_{[i,-\ell]}$ the $\ell^{\text{ th}}$ job on the machine $M_i$ from the end, with $p_{[i,-\ell]}$ its processing time. We also denote by $J_{[i,k]}$ the $k^{\text{ th}}$ job on the machine $M_i$ from the beginning, with $p_{[i,k]}$ its processing time.

A schedule $\sigma$ is given by :
$
	\sigma = \begin{pmatrix}
		J_{[1,1]} & \hdots & J_{[1,\eta_{1}]} \\
		          & \hdots &                  \\
		J_{[m,1]} & \hdots & J_{[m,\eta_m]}   \\
	\end{pmatrix}
$
where $\eta_i$ denotes the number of jobs assigned on machine $M_i$, with $\sum_{i=1}^{m} \eta_i = n$.

Since all $\eta_i$ depend on the schedule $\sigma$, we can have multiple values of $\eta_i$. To simplify the notation, let us introduce some dummy jobs with a processing time and a weight equal to zero. We will ensure that each machine is assigned exactly $n$ jobs, including the dummy ones. It is clear that in an optimal schedule, these dummy jobs are scheduled on the left and can be removed without modifying the objective function nor the optimality of the schedule. To facilitate notation, we will denote all dummy jobs with the index $0$, thus, $p_0=0$. Thus we add $(mn-n)$ dummy jobs. This results in the modified schedule $\sigma$ being defined as :
\begin{equation}\nonumber \sigma = \scalebox{0.85}{$
			\left(\begin{array}{ccccccc}
				\multirow{2}{*}{0} & \overbrace{\hdots}^{n-\eta_{1} \hspace{2mm} times} & \multirow{2}{*}{0} &        & \multirow{2}{*}{$J_{[1,n-\eta_{1}+1]}$} & \multirow{2}{*}{$\hdots$} & \multirow{2}{*}{$J_{[1,n]}$} \\[5mm]
				                   &                                                    &                    & \hdots &                                         &                           &                              \\
				\multirow{2}{*}{0} & \overbrace{\hdots}^{n-\eta_m \hspace{2mm} times}   & \multirow{2}{*}{0} &        & \multirow{2}{*}{$J_{[m,n-\eta_m+1]}$}   & \multirow{2}{*}{$\hdots$} & \multirow{2}{*}{$J_{[m,n]}$} \\
			\end{array}\right)$ }
	=
	\begin{pmatrix}
		J_{[1,1]} & \hdots & J_{[1,n]} \\
		          & \hdots &           \\
		J_{[m,1]} & \hdots & J_{[m,n]} \\
	\end{pmatrix}
\end{equation}

We focus on a ranking from right to left, i.e., in anti-chronological order $\ell=n+1-k$ (with $\ell=1$ for the last, 2 for the second-to-last job, and so forth). Since the completion time of the job on machine $M_i$ at position $k$ can be expressed as $C_{[i,k]}=\sum\limits_{h=1}^k p_{[i,h]}/V_i$, we see that $\sum\limits_{k=1}^n C_{[i,k]} = \sum\limits_{k=1}^n \sum\limits_{h=1}^k p_{[i,h]}/V_i = \sum\limits_{h=1}^n (n+1-h) p_{[i,h]}/V_i$. Hence, we find that $\sum\limits_{j=1}^n C_j = \sum\limits_{\substack{1\leqslant i \leqslant m \\ 1 \leqslant \ell \leqslant n}} \ell/V_i \times p_{[i,-\ell]}$

Correspondingly, we must minimize the weighted sum of processing times $p_{[i,-\ell]}$ with positional weights $\omega_{[i,\ell]} = \ell/V_i$, for position $\ell$ $(\ell=1,\hdots,n)$ from the end on machine $M_i$. The $\omega_{[i,\ell]}$'s do not depend on the jobs or on the schedule. It is well known that such a sum of pairwise products of two sequences of reals is achieved if one sequence is arranged in increasing order and the other in decreasing order \citep{hardyInequalitiesCambridgeMathematical1934}. To get the optimal solution, we have to sort the $\omega_{[i,\ell]}$'s in non-decreasing order and the jobs in the reverse order, i.e. Longest Processing Time (LPT) rule. Specifically, the $k^{\text{th}}$ smallest coefficient $\omega_{[k]}$ is associated with the $k^{\text{th}}$ largest processing time $p_{[k]}$. Consequently, we have the following theorem.

\begin{theorem}\label{thm:Solve-Q|Vo,Vmax|Sum-Cj}
	The $Q|V_i \in \{V_{0},V_{1}\}|\sum_j C_j$ problem is solvable in $\mathcal{O}(n \log n)$ time.
\end{theorem}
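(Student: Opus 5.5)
The plan is to make the preceding positional-weight argument algorithmic, and to exploit the two-speed structure so that matching weights to jobs costs nothing beyond sorting. The starting point is the identity $\sum_j C_j=\sum_{i,\ell} (\ell/V_i)\, p_{[i,-\ell]}$, which holds for every schedule in the padded representation with dummy jobs. By the rearrangement inequality of \cite{hardyInequalitiesCambridgeMathematical1934}, an optimal schedule pairs the $n$ smallest positional weights $\omega_{[i,\ell]}$, taken in non-decreasing order, with the $n$ jobs sorted in LPT order. The remaining positions get dummy jobs, which contribute nothing.

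The first step is to check that such an assignment really is a feasible schedule. Suppose the $n$ chosen weights on a machine $M_i$ are the positions $\ell=1,\dots,\eta_i$. The chosen set is downward closed on each machine, because $\ell/V_i<(\ell+1)/V_i$. So the $n$ smallest weights always form prefixes $\{1,\dots,\eta_i\}$ (counted from the end) on each machine, and ties can be broken so that this remains true. The jobs can then be placed contiguously from time $0$ with no idle time. A larger processing time goes with a smaller $\ell$, so each machine runs its jobs in SPT order, as it should.

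The second step is the complexity count, which uses $V_i\in\{V_0,V_1\}$. All machines of speed $V_1$ share the weight multiset $\{\ell/V_1:\ell\ge1\}$, each value with multiplicity $m_1$. Likewise the slow machines give $\{\ell/V_0\}$, each value with multiplicity $m_0$. The smallest $n$ weights therefore come from merging two sorted arithmetic sequences. Each sequence is generated on the fly, and each generated value is handed out round-robin across the $m_1$ (respectively $m_0$) identical machines. This takes $\mathcal{O}(n)$ time after sorting, with no heap over $m$ machines required, provided $m\le n$. If $m>n$ we keep only $\min(m_1,n)$ and $\min(m_0,n)$ machines of each speed, since the rest stay empty. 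Sorting the jobs in LPT order costs $\mathcal{O}(n\log n)$, and assigning the $k$-th largest job to the $k$-th smallest weight costs $\mathcal{O}(n)$, so the total is $\mathcal{O}(n\log n)$.

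The main obstacle is the careful justification that the rearrangement pairing over $mn$ padded positions equals the optimum over genuine schedules. Both directions are needed: every schedule maps to such a pairing, and the optimal pairing maps back to a valid schedule without idle time. The downward-closedness argument above resolves this. Ties among equal weights across speeds (when $\ell/V_1=\ell'/V_0$) affect only which optimal schedule is chosen, not the objective value.
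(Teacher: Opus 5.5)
Your proposal is correct and follows the paper's argument: you use the positional-weight identity $\sum_j C_j=\sum_{i,\ell}(\ell/V_i)\,p_{[i,-\ell]}$ together with the rearrangement inequality, pairing the $k$-th smallest weight with the $k$-th largest processing time. Your downward-closedness check and your explicit merge of the two arithmetic weight sequences spell out the feasibility and the $\mathcal{O}(n\log n)$ running time, which the paper leaves implicit.
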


\qed

\begin{remark}
	Firstly, the proof of the general case, i.e., $Q||\sum_j C_j$, provides the algorithm by constructing the schedule from the right to the left, i.e., from the end of the machine schedule to the beginning. Alternatively, we can equivalently consider an SPT-FAM rule, thus scheduling jobs from the left to the right.

	Secondly, sets with identical positional weights correspond to batches of jobs. Furthermore, when there are only two machine speeds involved, an optimal schedule is characterized by a repetition of blocks. Figure \ref{fig:schedulingPatternFProb} illustrates the blocks structure, where jobs within the dashed group signify their membership to a block and can subsequently undergo permutation without changing the value of $\sum_j C_j$. We denote such a block as $\mathcal{B}_{b}$, which is a set composed of pairs $(i,k)$ where $i$ is the machine index and $k$ is the position on the machine schedule.

\end{remark}

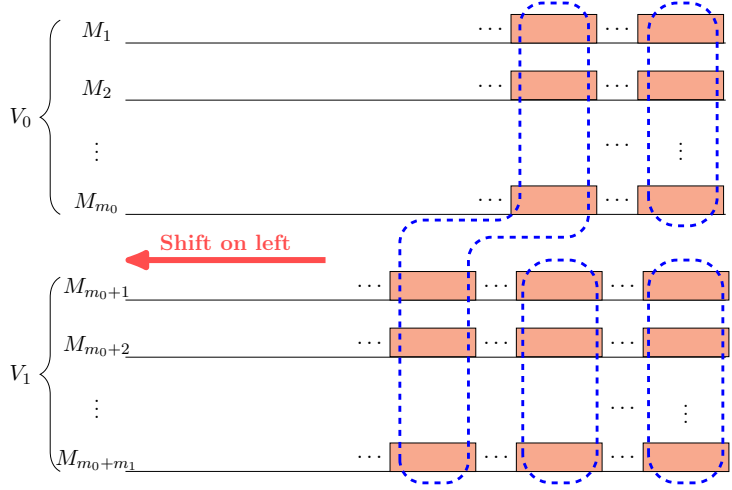
\begin{figure}[h]
	\centering
	\scalebox{0.76}{%
		\begin{tikzpicture}

			\def\yVmax{-4.5} % the y value for draw Vmax machines

			% draw machines V0    
			\draw[] (0,0) node {$M_1$};
			\draw[ultra thin] (0.5,-0.2) -- (11,-0.2);
			\draw[] (0,-1) node {$M_2$};
			\draw[,ultra thin] (0.5,-1.2) -- (11,-1.2);
			\draw[] (0,-2) node {$\vdots$};
			\draw[] (0,-3) node {$M_{m_{0}}$};
			\draw[ultra thin] (0.5,-3.2) -- (11,-3.2);
			\draw [decorate,decoration={brace,amplitude=10pt,mirror,raise=4pt}] (-0.5,0.2) -- node[left=5mm]{$V_{0}$} (-0.5,-3.2);

			% draw machines Vmax
			\draw[] (0,\yVmax) node {$M_{m_{0}+1}$};
			\draw[ultra thin] (0.5,\yVmax-0.2) -- (11,\yVmax-0.2);
			\draw[] (0,\yVmax-1) node {$M_{m_{0}+2}$};
			\draw[ultra thin] (0.5,\yVmax-1.2) -- (11,\yVmax-1.2);
			\draw[] (0,\yVmax-2) node {$\vdots$};
			\draw[] (0,\yVmax-3) node {$M_{m_{0} + m_{1}}$};
			\draw[ultra thin] (0.5,\yVmax-3.2) -- (11,\yVmax-3.2);
			\draw [decorate,decoration={brace,amplitude=10pt,mirror,raise=4pt}] (-0.5,\yVmax+0.2) -- node[left=5mm]{$V_{1}$} (-0.5,\yVmax-3.2);

			% draw jobs on V0
			\matrix [anchor=west,ampersand replacement=\&] at (6.4,-1.45)
			{
				\node {$\hdots$};
				\&\node [rectangle,draw,fill=Red!40,minimum width = 15mm,minimum height = 5mm] {};
				\& \node {$\hdots$};
				\&\node [rectangle,draw,fill=Red!40,minimum width = 15mm,minimum height = 5mm] {}; \\[4.8mm]

				\node {$\hdots$};
				\&\node [rectangle,draw,fill=Red!40,minimum width = 15mm,minimum height = 5mm] {};
				\&\node {$\hdots$};
				\&\node [rectangle,draw,fill=Red!40,minimum width = 15mm,minimum height = 5mm] {}; \\[4mm]

				\node {};
				\& \node {};
				\& \node {$\hdots$};
				\& \node[inner xsep=7mm] {$\vdots$};                                               \\[3.1mm]

				\node {$\hdots$};
				\&\node [rectangle,draw,fill=Red!40,minimum width = 15mm,minimum height = 5mm] {};
				\&\node {$\hdots$};
				\&\node [rectangle,draw,fill=Red!40,minimum width = 15mm,minimum height = 5mm] {}; \\
			};

			% draw jobs on Vmax

			\matrix [anchor=east,ampersand replacement=\&] at (11.2,\yVmax-1.45)
			{
				\node {$\hdots$};
				\&\node [rectangle,draw,fill=Red!40,minimum width = 15mm,minimum height = 5mm] {};
				\& \node {$\hdots$};
				\&[-0.1mm]\node [rectangle,draw,fill=Red!40,minimum width = 15mm,minimum height = 5mm] {};
				\& \node {$\hdots$};
				\&\node [rectangle,draw,fill=Red!40,minimum width = 15mm,minimum height = 5mm] {}; \\[4.8mm]

				\node {$\hdots$};
				\&\node [rectangle,draw,fill=Red!40,minimum width = 15mm,minimum height = 5mm] {};
				\& \node {$\hdots$};
				\&\node [rectangle,draw,fill=Red!40,minimum width = 15mm,minimum height = 5mm] {};
				\& \node {$\hdots$};
				\&\node [rectangle,draw,fill=Red!40,minimum width = 15mm,minimum height = 5mm] {}; \\[5mm]

				\node {};
				\& \node {};
				\& \node {};
				\& \node {};
				\& \node {$\hdots$};
				\& \node[inner xsep=7mm] {$\vdots$};                                               \\[2.1mm]

				\node {$\hdots$};
				\&\node [rectangle,draw,fill=Red!40,minimum width = 15mm,minimum height = 5mm] {};
				\& \node {$\hdots$};
				\&\node [rectangle,draw,fill=Red!40,minimum width = 15mm,minimum height = 5mm] {};
				\& \node {$\hdots$};
				\&\node [rectangle,draw,fill=Red!40,minimum width = 15mm,minimum height = 5mm] {}; \\
			};

			% draw permutation jobs Vmax

			% 1st block
			\draw [blue,dashed,line width=0.5mm]
			(9.65,\yVmax -3.4) [rounded corners=5mm] -- (9.65,\yVmax+0.5)
			-- (10.95,\yVmax+0.5) [rounded corners=5mm] -- (10.95,\yVmax -3.4) [rounded corners=5mm] -- cycle;

			% 2nd block
			\draw [blue,dashed,line width=0.5mm]
			(7.45,\yVmax -3.4) [rounded corners=5mm] -- (7.45,\yVmax+0.5)
			-- (8.75,\yVmax+0.5) [rounded corners=5mm] -- (8.75,\yVmax -3.4) [rounded corners=5mm] -- cycle;

			% draw permutation jobs V0 

			% 1st block
			\draw [blue,dashed,line width=0.5mm]
			(9.65,-3.4) [rounded corners=5mm] -- (9.65,0.5)
			-- (10.85,0.5) [rounded corners=5mm] -- (10.85,-3.4) [rounded corners=5mm] -- cycle;

			% draw permutation jobs V0 and Vmax
			% 1th column

			\draw [blue,dashed,line width=0.5mm]
			(5.3,\yVmax -3.4) [rounded corners=4mm] -- (5.3,\yVmax+1.2) [rounded corners=4mm] -- (7.4,\yVmax+1.2) [rounded corners=4mm] -- (7.4,0.5) [rounded corners=4mm] -- (8.6,0.5) -- (8.6,\yVmax+0.9) [rounded corners=4mm] -- (6.5,\yVmax+0.9) [rounded corners=4mm] -- (6.5,\yVmax -3.4) [rounded corners=4mm] -- cycle;

			% draw caler à gauche
			\draw[line width=3pt,red!70,{Latex[round]}-,] (0.5,\yVmax+0.5) -- (4,\yVmax+0.5) node [midway,above] {\textbf{Shift on left}};

		\end{tikzpicture}
	}
	\caption{Scheduling pattern with block structure for the solution of $Q|V_i \in \{V_{1},V_{0}\}|\sum_j C_j$}
	\label{fig:schedulingPatternFProb}
\end{figure}

This is an important result because it enables us to caracterize the set of optimal schedules for the first criterion ($\sum_j C_j$). Thus, the problem is to determine which permutation of jobs within a block should be applied to obtain an optimal schedule for the second criterion ($\sum_j w_j U_j$).

% --------------------------
%  -------------------------
% 
%	 ADV	
% 
% -------------------------- 
% --------------------------

\subsection{Mixed Integer Programming}\label{subsec:mipmodel}

In this subsection we show that the bilevel problem can be modeled by means of Mixed Integer Programming. Before presenting the model, we introduce a few notations.

We have $b_{\max}$ blocks $\mathcal{B}_b$, $1 \leqslant b \leqslant b_{\max}$, given by the block structure described in Section~\ref{subsec:BlockStruct}. Let $E=\left\{(i,k)\right\}$ be a set of locations (just like $\mathcal{B}_b$) where $i$ is the machine index and $k$ the position on the machine schedule in chronological order; we have $E=\mathcal{B}_1 \cup \hdots \cup \mathcal{B}_{b_{\max}}$.
We need to assign a job to each position in the blocks $\mathcal{B}_2, \hdots, \mathcal{B}_{b_{\max}}$, but the first block $\mathcal{B}_1$ may not be completely filled. However, we can compute the number of jobs $Q$ that should go into $\mathcal{B}_1$ as $Q=n- \sum_{b=2}^{b_{\max}} |\mathcal{B}_b|$.

We use binary variables $x_{i,j,k}$, such that $x_{i,j,k}=1$ if job $J_j$ is scheduled at location $(i,k)$; and zero otherwise. We also use binary variables $U_{i,j,k}$, with $U_{i,j,k}=1$ if job $J_j$ is scheduled at location $(i,k)$ and is penalized. The equations only state that tardiness implies penalty. So, a job $J_j$ may be a priori unduly penalized, but this situation is, at the end, avoided by the minimization of penalties. So, we have $U_j \leqslant \sum_{(i,k)\in E} U_{i,j,k}$, and the minimization of $\sum_j \sum_{(i,k)\in E} w_j \, U_{i,j,k}$ leads to minimizing $\sum_j w_j U_{j}$. We use variables $C_{i,k}$ to express the completion time of the job in position $k$ on machine $M_i$.

The following MIP model holds:

{
\setstretch{0}
\begin{flalign}
	 & \min \sum_{j=1}^N \sum_{(i,k)\in E} w_j U_{i,j,k} \label{eq:objMIP}                                                                                                                                                                                                              \\
	 & \textbf{s.t.}                                                                                  &  & \nonumber                                                                                                                                                                    \\
	 & \sum_{(i,k)\in E} x_{i,j,k} \leqslant 1                                                        &  & \forall j \in \{1,\hdots,N\} \label{eq:noDuplicationJob}                                                                                                                     \\
	 & \sum_{j=1}^N x_{i,j,k} \leqslant 1                                                             &  & \forall (i,k) \in E \label{eq:disjonctivesConstLoc}                                                                                                                          \\
	 & \sum_{j=1}^N \sum_{(i,k)\in E \setminus \mathcal{B}_1} x_{i,j,k} = n-Q                         &  & \label{eq:fillAllBlock}                                                                                                                                                      \\
	 & \sum_{j=1}^N \sum_{(i,k)\in \mathcal{B}_1} x_{i,j,k} = Q \label{eq:tryFillFirstBlock}                                                                                                                                                                                            \\
	 & \sum_{j=1}^N x_{i,j,k} \, p_j \leqslant \sum_{j=1}^N x_{i',j,k'} \, p_j                        &  & \forall 1 \leqslant b \leqslant b_{\max}-1 \, , \forall (i,k) \in \mathcal{B}_b \, , \forall (i',k') \in \mathcal{B}_{b+1}\label{eq:incrPjByBlock}                           \\
	 & C_{i,k} = C_{i,k-1} + \sum_{j=1}^N x_{i,j,k} p_j/V_i                                   &  & \forall (i,k) \in E                                                                                                                                &  & \label{eq:computeCj} \\
	 & C_{i,0}=0                                                                                      &  & \forall i \in \{1,\hdots,m\}                                                                                                                                                 \\
	 & U_{i,j,k} \leqslant x_{i,j,k}                                                                  &  & \forall j \in \{1,\hdots,N\}\, , \forall (i,k) \in E \label{eq:lateImplScheduled}                                                                                            \\
	 & C_{i,k} \leqslant \sum_{j=1}^N x_{i,j,k} \, d_j + \left(\sum_{j=1}^N U_{i,j,k}\right)\, H(i,k) &  & \forall (i,k) \in E \label{eq:checkDueDate}
\end{flalign}
}
Here $H(i,k) = k/V_i \times \max\limits_j p_j$, which is an upper bound on the completion time of the $k$th job on machine $M_i$.

The objective function (\ref{eq:objMIP}) minimizes the weighted number of tardy jobs. Constraints (\ref{eq:noDuplicationJob}) ensure that no jobs are duplicated. The disjunctive constraints (\ref{eq:disjonctivesConstLoc}) ensure that each location has at most one job. Constraints (\ref{eq:fillAllBlock}) and (\ref{eq:tryFillFirstBlock}) guarantee that all blocks, except maybe the first one, must be completely filled: these constraints imply that we select exactly $n$ jobs. Constraints (\ref{eq:incrPjByBlock}) enforce that the processing times of assigned jobs between consecutive blocks do not decrease. Constraints (\ref{eq:computeCj}) allow to compute the completion times. Constraints (\ref{eq:lateImplScheduled}) ensure that only a scheduled job can be penalized. Finally, constraints (\ref{eq:checkDueDate}) guarantee that a job that is not penalized is necessarily early, and so, we need to check that its completion time is smaller than its due date.

This mathematical formulation has $\mathcal{O}(N(n+m))$ variables and $\mathcal{O}(N^2+N m^2)$ constraints. 

\subsection{Complexity results}

\subsubsection{The follower's problem}\label{subsubsec:FollowerPb}

Firstly, we demonstrate that in the case of equal-size jobs, the follower's problem, with an arbitrary number of machine speeds can be solved in polynomial time.

\begin{theorem}\label{thm:Solve-Q|pj=p|Lex(Sum-Cj,Sum-wj)}
	The $Q | p_j=p | Lex \left( \sum_{j} C_j, \sum_{j} w_jU_j \right)$ problem is polynomially solvable.
\end{theorem}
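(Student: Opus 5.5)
With equal processing times, the first criterion fixes only which positions are used, not which job goes where. The plan is to characterize those optimal position sets and then solve the second criterion as an assignment problem.

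\textbf{Step 1: the optimal position sets.} By the positional-weight argument preceding Theorem~\ref{thm:Solve-Q|Vo,Vmax|Sum-Cj}, every schedule satisfies
\[
\sum_j C_j = p \sum_{(i,\ell)\ \text{used}} \ell/V_i .
\]
Here $(i,\ell)$ ranges over the $n$ occupied positions, counted from the end on each machine.
\begin{itemize}
\item A set of $n$ positions is optimal exactly when it consists of $n$ smallest coefficients $\omega_{[i,\ell]}=\ell/V_i$.
\item Ties may occur, so several such sets can exist.
\item Every schedule whose used positions form one of these sets is optimal, and every optimal schedule is of this form.
\item Once positions are left-shifted, a position $(i,\ell)$ corresponds to the chronological slot $k=\eta_i+1-\ell$ on $M_i$, with $\eta_i$ the number of positions used on $M_i$.
\item The completion time of that slot is $k\,p/V_i$. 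All jobs have length $p$, so it does not depend on which jobs fill the slots.
\end{itemize}

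\textbf{Step 2: the tie issue.} Ties among the coefficients mean the optimal position set is not unique. For instance, when $V_1/V_0$ is rational, the boundary coefficient value $\omega^*$ (the $n$-th smallest) can occur on several machines. Choosing which tied positions to use changes the vector $(\eta_i)$, hence the chronological slots and completion times.

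\textbf{Step 3: reduction to an assignment problem.} Every optimal schedule uses all positions with coefficient $<\omega^*$. It also uses some $r$ of the $t$ positions with coefficient exactly $\omega^*$.
\begin{itemize}
\item On each machine $M_i$, at most one position can have coefficient $\omega^*$, namely $\ell=\omega^* V_i$. Choosing it or not simply sets $\eta_i$ to one of two values.
\item Hence the candidate slots form a fixed set: all mandatory slots plus $t$ optional slots. Each slot has a well-defined completion time: the optional slot on $M_i$ is the last one, with $C=(\eta_i^{\max})p/V_i$. Mandatory slots on $M_i$ have completion times $kp/V_i$ for $k=1,\dots,\eta_i$.
\item The catch is that the mandatory completion times on $M_i$ do not shift when the optional slot is added. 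Adding a slot at the end, with chronological indexing from the start, keeps earlier completion times unchanged.
\end{itemize}
Therefore any optimal schedule is exactly a choice of $n$ of these fixed slots, containing all mandatory ones, with a bijection from jobs to the chosen slots.

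\textbf{Step 4: solving the assignment.} Build a bipartite graph with jobs on one side and candidate slots on the other. The cost of placing $J_j$ in slot $s$ is $w_j$ if $C_s>d_j$ and $0$ otherwise. All mandatory slots must be covered, and exactly $r$ optional slots are used. If there are more jobs than slots, add dummy slots of cost $0$ and completion time irrelevant so that every job is matched. In the pure follower problem all $n$ jobs must be scheduled, so no dummy slots are needed and we simply require every job matched. A minimum-cost assignment, solved with the Hungarian algorithm in $\mathcal{O}(n^3)$, gives the lexicographic optimum.

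A simpler alternative also works. Slots can be sorted by completion time, and a greedy rule on EDD-ordered jobs (the Moore–Hodgson style exchange with weights) would need care. The assignment formulation avoids this and clearly runs in polynomial time.

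\textbf{Main obstacle.} The delicate point is Step~2: checking rigorously that the set of all $\sum_j C_j$-optimal schedules is exactly captured by "mandatory slots plus any $r$ of the optional ones" with fixed completion times. The key fact is that each machine has at most one tied position and it is always the last slot of that machine. Once this is verified, polynomiality follows directly from weighted bipartite matching.
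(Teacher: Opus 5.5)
Your proof is correct, but it takes a different route from the paper's on both main steps. The paper never analyzes ties. It runs SPT-FAM once, reads off the $n$ completion times, and treats them as fixed.

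Your own Step~3 shows why that is legitimate, contrary to what Step~2 suggests. The optional slot on $M_i$ completes at $\eta_i^{\max}p/V_i=\omega^* p$, so every tied slot has the \emph{same} completion time $\omega^* p$, and all mandatory slots finish strictly earlier. Hence the multiset of completion times is $p$ times the multiset of the $n$ smallest coefficients, and it is the same for every $\sum_j C_j$-optimal schedule. The tie only decides which machine hosts a slot, which is irrelevant for $\sum_j w_jU_j$. Noticing this lets you drop the mandatory/optional bookkeeping entirely. That includes the requirement that mandatory slots be covered, which ``every job matched'' does not literally enforce. The gap is harmless, since moving a job from a slot completing at $\omega^* p$ to an empty earlier slot never increases the cost.

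For the second criterion, the paper uses a greedy in the spirit of Dessouky et al.\ instead of your min-cost bipartite matching. It scans completion times from the latest and gives each one the heaviest unassigned job that would be early there, leaving the slot empty if there is none. The leftover jobs then fill the remaining slots as tardy jobs. This greedy is optimal because the set of jobs that are early at a slot only grows as the slot moves earlier. It runs in $\mathcal{O}(n\log n)$, which matters because the paper reuses it as a subroutine in its dynamic program and branch-and-bound.

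Your Hungarian formulation is slower (cubic), but it needs no structural exchange argument. It would also work for arbitrary job--slot costs, not only the threshold costs $0$ or $w_j$.
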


\begin{proof}
	We know that all optimal schedules for the $ Q||\sum_{j} C_j$ problem are characterized by the block structure, and the SPT-FAM rule gives an optimal solution \citep{conwayTheoryScheduling1967b} in $\mathcal{O}(n \log n)$ time. Thus, we can compute all completion times, as jobs' processing times are equal. Moreover, by extending the algorithm mentioned by \cite{dessoukySchedulingIdenticalJobs1990a}, we can solve this problem. The idea is to determine which job is on time if assigned to a completion time, and then choose among them the one with the maximum weight. To do this, the jobs are initially sorted according to the reverse EDD order, in $\mathcal{O}(n \log n)$ time. Then, we start with the largest completion time and assign the early job with the largest weight. If no early job exists then this completion time is temporarily left unassigned, and we move to the second largest completion time, repeating the above operation. At the end of this first pass, when all completion times have been considered, then the remaining jobs can be randomly assigned to the remaining unassigned completion times: they are all tardy and with minimum weight. This can be implemented to run in $\mathcal{O}(n \log n)$ time.
\end{proof}

Now, we focus on the general case with arbitrary processing times. In the case of $2$ identical machines, we show that the follower's problem is $\mathcal{NP}$-hard in the weak sense.

\begin{theorem}\label{thm:P2||Lex(SumCj,SumUj)-m-fixed-NP-hard}
	The $P2||Lex \left( \sum_j C_j,\sum_j U_j \right)$ problem is $\mathcal{NP}$-hard in the weak sense.
\end{theorem}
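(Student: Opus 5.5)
We reduce from the weakly $\mathcal{NP}$-complete \textsc{Even-Odd Partition} problem. An instance consists of $2t$ positive integers $a_1 < a_2 < \dots < a_{2t}$ with $\sum_h a_h = 2A$. The question is whether there is a subset $S$ with $\sum_{h\in S} a_h = A$ containing exactly one element of each pair $\{a_{2q-1},a_{2q}\}$. Since a feasibility question is enough, we work with the decision version of $P2||Lex(\sum_j C_j,\sum_j U_j)$ and ask whether some $\sum_j C_j$-optimal schedule has no tardy job. The reduction is clearly pseudo-polynomial-free, because all numbers are copied from the partition instance. Membership of the decision version in $\mathcal{NP}$ is immediate, since a schedule is a certificate.

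\textbf{Construction.} Create $2t$ jobs with $p_{h} = a_h$, perturbed if necessary (e.g.\ $p_h = M\cdot h + a_h$ with $M$ large) so that all processing times are distinct. By the block structure of Theorem~\ref{thm:Solve-Q|Vo,Vmax|Sum-Cj} with identical speeds, the $\sum_j C_j$-optimal schedules on two machines are exactly those in which, scanning in SPT order, the pair $\{J_{2q-1},J_{2q}\}$ occupies position $q$ on the two machines, one job per machine, in either orientation. These pairs form the blocks. The equal number of jobs ($2t$) guarantees that the first block is full, so no dummy slack appears. Any choice of orientations yields an optimal $\sum_j C_j$, and every optimal schedule arises this way. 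Hence the set of optimal follower schedules is in bijection with the $2^t$ choices of one element from each pair to place on machine $M_1$.

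\textbf{Due dates.} The machine loads are $L_1 = \sum_{h\in S} p_h$ and $L_2 = \sum_h p_h - L_1$, where $S$ is the set of jobs put on $M_1$. We force $L_1 = L_2 = \frac{1}{2}\sum_h p_h =: B$. To do this, the two last jobs $J_{2t-1},J_{2t}$ get due date $B$, and every other job gets a large due date, e.g.\ $\sum_h p_h$. Then both last jobs are on time iff both machines complete by $B$, i.e.\ iff $L_1 = L_2 = B$. With the perturbation $p_h = Mh + a_h$, each machine receives exactly one job per pair, so the $M$-parts sum to the same value on both machines. Consequently $L_1 = L_2$ iff $\sum_{h\in S} a_h = A$. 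It follows that there is an optimal schedule with $\sum_j U_j = 0$ iff the even-odd partition instance is a yes-instance. (If $\sum_h p_h$ is odd, the answer is trivially no, and that case is handled consistently.)

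\textbf{Main obstacle.} The delicate step is the exact characterization of the optimal $\sum_j C_j$ schedules. We must ensure that ties between positional weights, which are identical across the two machines, do not allow a job to move to a different position, and that no other optimal schedules exist. This relies on strict distinctness of the processing times across pairs, which the $Mh$ perturbation provides. Distinctness ensures that the rearrangement inequality is tight only for the SPT pairing. The perturbation also keeps the pair $\{2q-1,2q\}$ structure aligned with the even-odd constraint. Once this characterization is verified, the equivalence above is routine.
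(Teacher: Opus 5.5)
\textbf{Gap: the perturbation breaks the reduction.} Your overall route is the paper's: reduce from Even-Odd Partition, use the block structure to see that the $\sum_j C_j$-optimal schedules are exactly those placing $\{J_{2q-1},J_{2q}\}$ at position $q$, one job per machine, and force $L_1=L_2$ with a due date equal to half the total load. Putting the tight due date only on the last two jobs, instead of using a common due date as the paper does, makes no difference. The problem is $p_h=Mh+a_h$.

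Your claim that ``the $M$-parts sum to the same value on both machines'' is false. The two members of pair $q$ receive offsets $M(2q-1)$ and $2qM$, which differ by $M$. Write $\epsilon_q=+1$ if $M_1$ gets $J_{2q}$ and $\epsilon_q=-1$ otherwise. Then $L_1-L_2=\sum_q \epsilon_q\,(M+a_{2q}-a_{2q-1})$. For $M$ large ($M>2A$ suffices), $L_1=L_2$ therefore requires $\sum_q\epsilon_q=0$ in addition to the Even-Odd condition $\sum_q\epsilon_q(a_{2q}-a_{2q-1})=0$. When $t$ is odd, $\sum_q\epsilon_q$ is odd and hence nonzero, so no schedule is balanced. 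For example, $a=(1,2,3,5,6,7)$ is a yes-instance ($\{1,5,6\}$ versus $\{2,3,7\}$), but it is mapped to a no-instance.

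\textbf{The fix.} You do not need a perturbation. The Even-Odd Partition data already satisfy $a_1<a_2<\dots<a_{2t}$, so $p_h=a_h$ gives distinct positive processing times with pair $q$ exactly at position $q$.

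If you want explicit separation between pairs, give both members of a pair the same offset, as the paper does: $p_{2k+1}=2kA+a_{2k+1}$ and $p_{2k+2}=2kA+a_{2k+2}$, with common due date $D=n(n-1)A+A$. The offsets then cancel in $L_1-L_2$, and your equivalence argument goes through unchanged.

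Your ``main obstacle'' paragraph should likewise credit distinctness of the processing times to the strict ordering of the $a_h$, not to the perturbation.
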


\begin{proof}
	Let us consider the following problem which is known to be $\mathcal{NP}$-complete in the weak sense \citep{Garey1988}:

	\texttt{Even-Odd Partition (EOD)}

	\begin{tabular}{p{0.1\textwidth}p{0.9\textwidth}}
		\underline{Data:}     & $2n$ positive integers $a_1 < a_2 < \hdots < a_{2n}$ where $A = 1/2 \times \sum_{j=1}^{2n}a_j$. \\
		\underline{Question:} & Does there exist a partition into two sets $A_1$ and $A_2$, each containing                      \\ & exactly one element from the pair $\{ a_{2j-1},a_{2j} \}$ for $j=1, \hdots ,n$, such that \\ & $\sum_{j \in A_{1}} a_j = \sum_{j \in A_2} a_j = A$? \\
	\end{tabular}

	We show that this problem reduces to the decision version of problem $P2||Lex \left( \sum_j C_j,\sum_j U_j \right)$:

	\texttt{DEC}

	\begin{tabular}{p{0.1\textwidth}p{0.8\textwidth}}
		\underline{Data:}     & A set of jobs $J_j$ with a processing time $p_j$, a weight $w_j$ and a due date $d_j$, to be processed on 2 identical machines. Let $C^{\star}$ be the value of an optimal solution of $P2||\sum_j C_j$, and $U$ a given value. \\
		\underline{Question:} & Does there exist a schedule $\sigma$ such that $\sum_j C_j(\sigma) = C^{\star}$ and $\sum_j U_j(\sigma) \leq U$?                                                                                                                \\
	\end{tabular}

	We show that there exists a reduction $\propto$ from \texttt{EOD} to \texttt{DEC}. Let $I$ be a given instance of \texttt{EOD} and $\propto(I)$ is the corresponding instance of \texttt{DEC}. This instance $\propto(I)$ contains $2n$ jobs that must be scheduled on 2 identical machines. Job $J_j$ corresponds to $a_j$. The processing times are defined as follows: $\forall k=0, \hdots,n-1$ we put
	$p_{2k+1} = 2kA + a_{2k+1}$, $p_{2k+2} = 2kA + a_{2k+2}$. Furthermore, each job $J_j$ $(j=1, \ldots, 2n)$ has due date $d_{j} =D  = 1/2 \times \sum_{i=1}^{2n}p_i=\sum_{k=0}^{n-1} \left(p_{2k+1}/2+p_{2k+2}/2\right)=\sum_{k=0}^{n-1} 2A + A=n(n-1)A + A$. Finally, we put $U=0$.

	We demonstrate that the \texttt{EOD} problem's answer is true if and only if there exists a schedule $\sigma$ that has $\sum_j C_j(\sigma) = C^{\star}$ and in which all jobs are on time, i.e., $\sum_j U_j (\sigma )=0$.

	We show the forward implication ($\Rightarrow$). Let us consider the scenario where the answer to the \texttt{EOD} problem is true. This implies that there exists a partition in two sets, $A_1$ and $A_2$, each containing exactly one element from the pair $\{ a_{2j-1},a_{2j} \}$ for $j=1, \hdots, n$, such that $\sum_{j \in A_{1}} a_j = \sum_{j \in A_2} a_j = A$. We create our schedule by putting $J_j$ on $M_1$ if $j \in A_1$, and on $M_2$, otherwise. Consequently, the completion times of the last job on each machine are $C_{M_1}=\sum_{j \in A_1} p_j = \sum_{j \in A_1} 2 \left\lfloor (j-1)/2 \right\rfloor A + a_j = 2A\sum_{k = 0}^{n-1} k + A= n(n-1)A + A$ and similarly $C_{M_2}=\sum_{j \in A_2} p_j =n(n-1)A + A$. Since the common due date equals $n(n-1)A + A$, this indicates that all jobs can, indeed, be scheduled on time and $\sum_j C_j$ is minimum as this schedule follows the SPT-FAM rule (see Figure \ref{fig:machineSchedulesP2||Lex(Sum-Cj,Sum-wj)}).
	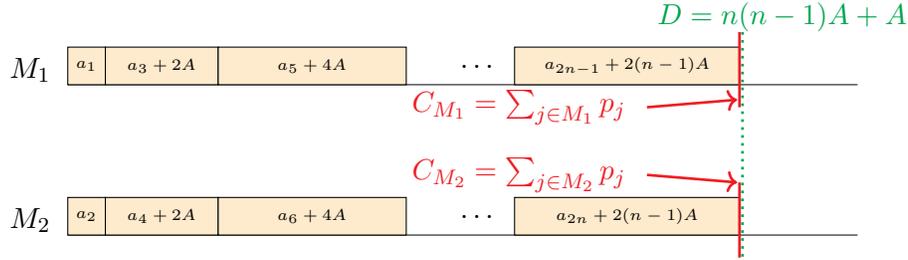
\begin{figure}[h]
		\centering
		\scalebox{1}{%
			\begin{tikzpicture}
				% draw machines
				\draw (0,0) node {$M_1$};
				\draw[ultra thin] (0.5,-0.2) -- (11,-0.2);
				\draw (0,-2) node {$M_2$};
				\draw[ultra thin] (0.5,-2.2) -- (11,-2.2);

				% draw jobs on M1 machine
				\draw (0.75,0.05) node [rectangle,draw,fill=YellowOrange!20,minimum width = 5mm,minimum height = 5mm,node font=\tiny] {$a_1$};

				\draw (1.75,0.05) node [rectangle,draw,fill=YellowOrange!20,minimum width = 15mm,minimum height = 5mm,node font=\tiny] {$a_3 + 2A$};

				\draw (3.75,0.05) node [rectangle,draw,fill=YellowOrange!20,minimum width = 25mm,minimum height = 5mm,node font=\tiny] {$a_5+ 4 A$};

				\draw (5.94,0.05) node [rectangle,minimum height = 5mm] {$\hdots$};

				\draw (7.94,0.05) node [rectangle,draw,fill=YellowOrange!20,minimum width = 30mm,minimum height = 5mm,node font=\tiny] {$a_{2n-1} + 2(n-1) A$};

				% draw jobs on M2 machine
				\draw (0.75,-1.95) node [rectangle,draw,fill=YellowOrange!20,minimum width = 5mm,minimum height = 5mm,node font=\tiny] {$a_2$};

				\draw (1.75,-1.95) node [rectangle,draw,fill=YellowOrange!20,minimum width = 15mm,minimum height = 5mm,node font=\tiny] {$a_4 + 2A$};

				\draw (3.75,-1.95) node [rectangle,draw,fill=YellowOrange!20,minimum width = 25mm,minimum height = 5mm,node font=\tiny] {$a_6+ 4A$};

				\draw (5.94,-1.95) node [rectangle,minimum height = 5mm] {$\hdots$};

				\draw (7.94,-1.95) node [rectangle,draw,fill=YellowOrange!20,minimum width = 30mm,minimum height = 5mm,node font=\tiny] {$a_{2n} + 2(n-1)A$};

				% draw completion times
				\draw[line width=1pt,Red] (9.43,0.5) -- (9.43,-0.5);
				\draw[Red] (6.5,-0.5) node {$C_{M_1}=\sum_{j \in M_1} p_j$};
				\draw[->,Red,line width=1pt] (8.2,-0.5) -- (9.4,-0.4);

				\draw[Red,line width=1pt] (9.43,-1.5) -- (9.43,-2.5);
				\draw[Red] (6.5,-1.4) node {$C_{M_2}=\sum_{j \in M_2} p_j$};
				\draw[->,Red,line width=1pt] (8.2,-1.4) -- (9.4,-1.5);

				% draw due date
				\draw[line width=1pt, dotted,Green] (9.47,0.5) -- (9.47,-2.5);

				\draw[Green] (10,0.7) node {$D=n(n-1)A+A$};
			\end{tikzpicture}
		}
		\caption{Machine schedules for the problem $P2||Lex \left( \sum_j C_j,\sum_j U_j \right)$}
		\label{fig:machineSchedulesP2||Lex(Sum-Cj,Sum-wj)}
	\end{figure}

	Now suppose that there exists a schedule $\sigma$ with $\sum_j C_j(\sigma) = C^{\star}$ and $\sum U_j (\sigma ) \leq U=0$. Because of the choice of the processing times, we know that the $k$th position (counted from the start) on machines $M_1$ and $M_2$ is occupied by the jobs $J_{2k-1}$ and $J_{2k}$. Moreover, since the total processing time of all jobs is equal to $2D$ and each job is on time, both machines must finish exactly at time $D$. It is easily verified that the desired sets $A_1$ and $A_2$ can be obtained as the sets containing the indices of the jobs executed by machines $M_1$ and $M_2$ in $\sigma$, respectively.

\end{proof}

We also show that when the number of machines is a part of the instance, then the follower's problem is $\mathcal{NP}$-hard in the strong sense.

\begin{theorem}\label{thm:P||LexSumCj,SumUj-NP-hard}
	The $P||Lex \left( \sum_j C_j,\sum_j U_j \right)$ problem is $\mathcal{NP}$-hard in the strong sense.
\end{theorem}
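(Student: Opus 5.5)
We generalize the construction of Theorem~\ref{thm:P2||Lex(SumCj,SumUj)-m-fixed-NP-hard} from two machines to $m$ machines, reducing from 3-Partition, which is $\mathcal{NP}$-complete in the strong sense. An instance gives $3m$ positive integers $a_1,\hdots,a_{3m}$ with $\sum_j a_j = mB$ and $B/4 < a_j < B/2$. The question is whether they can be partitioned into $m$ triples, each summing to $B$.

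The obstacle compared with the two-machine case is that a 3-Partition instance has no pair structure. The SPT-FAM block structure of Section~\ref{subsec:BlockStruct} would therefore force job $J_{(k-1)m+1},\hdots,J_{km}$ into position $k$, which is determined by the ranks of the $a_j$, not by a free choice. To get around this, all $3m$ ``number'' jobs are given \emph{the same} block. For identical machines with $n=3m$, the positional weights $\omega_{[i,\ell]}=\ell$ create blocks of exactly $m$ locations each (one per machine at each position from the end). Hence any $\sum_j C_j$-optimal schedule must place the $m$ longest jobs in the last positions, the next $m$ in the second-to-last positions, and so on.

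To give each machine a free choice of three numbers, we separate positions rather than mixing the numbers across blocks. We add a large constant $L$ (say $L = mB$) and build three layers of jobs:
\begin{enumerate}
	\item layer $1$ has processing times $a_j$;
	\item layer $2$ has processing times $L+a_j$;
	\item layer $3$ has processing times $2L+a_j$.
\end{enumerate}
That would triple the jobs, so instead we encode the choice through due dates. Make every number job appear in every layer as a copy, and use $U_j$ to force consistency. Concretely, there are $m$ jobs per layer, so every machine takes exactly one job of each layer. For layer $t$ we create $m$ jobs whose lengths encode a selection. The cleanest version uses $3m$ jobs $J_1,\hdots,J_{3m}$ with $p_j = a_j + c$ for a big constant $c$ (e.g.\ $c = mB$). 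All processing times then lie in $(c+B/4,\, c+B/2)$. Any $\sum_j C_j$-optimal schedule assigns exactly $3$ jobs per machine: SPT-FAM with $n=3m$ on $m$ identical machines gives each machine exactly three positions. Within each block (the $m$ jobs of a given rank range), the jobs may be permuted freely across machines, but the blocks themselves are fixed by sorted order. This is again the rank rigidity, so it is only right if we break ties: the number jobs within a block must be \emph{equal}-length for the permutation to be free.

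The fix is therefore to let the $a_j$ enter the due dates rather than the processing times. Processing times must be equal within each block to preserve freedom. Equal processing times, however, make the problem polynomial (Theorem~\ref{thm:Solve-Q|pj=p|Lex(Sum-Cj,Sum-wj)}), so the numeric information cannot live solely in due dates. I therefore expect the main step to be a hybrid construction, which I would try first. We introduce, for each number $a_j$, a job of length $c+a_j$. We also introduce filler jobs with common due date $D=3c+B$ so that each machine holds exactly three number jobs, at positions whose blocks are interleaved with blocks of dummy equal-length jobs. Positions on machines are made interchangeable by adding, between consecutive number-ranks, blocks of identical long jobs. This keeps the SPT order so that only the assignment of number jobs to machines (not positions) matters up to the fixed total. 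Finally, all number jobs get due date $D$, forcing each machine's total to be at most $3c+B$; since the total is $m(3c+B)$, every machine equals exactly $3c+B$, giving a 3-partition. The converse follows by placing the triples on machines in SPT order, which keeps $\sum_j C_j = C^\star$.

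Verifying that the $\sum_j C_j$-optimal set genuinely permits every triple-to-machine assignment is the crux and the likely point of failure. If it does fail, I would fall back to a direct reduction from Numerical 3-Dimensional Matching, mirroring the paper's later bilevel proof. In that version, the three coordinate sets $X,Y,Z$ form three consecutive blocks, each of $m$ jobs with lengths $x_i$, $L+y_j$, and $2L+z_k$. Then SPT-FAM forces exactly one element of each set on each machine, but allows any matching within each block. The common due date $3L+b$ together with $\sum U_j=0$ then forces each machine's triple to sum exactly to $b$. Strong hardness holds because all numbers are polynomially bounded in the unary size of the instance.
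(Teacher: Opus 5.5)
Your headline 3-Partition construction fails, and not only at the verification step you flag. On $m$ identical machines, each block of a $\sum_j C_j$-optimal schedule has exactly one location per machine, and SPT rank alone decides which jobs fill which block. Surrounding the number jobs with blocks of identical fillers changes none of this. If the number jobs occupy blocks by themselves, every optimal schedule gives each machine exactly one number from the smallest third, one from the middle third and one from the largest third of the sorted list. A 3-partition need not have that form. For example, $\{11,11,18\},\{12,14,14\}$ with $B=40$, $m=2$ is a yes-instance, but no one-from-each-third triple sums to $40$. So your step ``placing the triples on the machines in SPT order keeps $\sum_j C_j=C^\star$'' is false in general.

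Your diagnosis of the obstruction is also off. Jobs inside a block can always be permuted across machines, whatever their lengths; the only rigidity is which jobs land in which block. In addition, $D=3c+B$ ignores the filler load on each machine, and nothing in your construction forces three number jobs per machine. The idea can be rescued only by giving each number job its own block. For $k=1,\dots,3m$, take one job of length $kB+a_k$ and $m-1$ fillers of length $kB$, with common due date $B+\sum_{k=1}^{3m}kB$. Then each number job may go to any machine, and $B/4<a_k<B/2$ forces three per machine. That is not what you wrote, so this part of the proposal is not a proof.

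Your fallback reduction from NUM-3DM is correct and is essentially the paper's proof. The paper uses lengths $x_i$, $y_j+\sum_{x\in X}x$ and $z_k+\sum_{y\in Y}y+\sum_{x\in X}x$, with $m=n$ machines and common due date $b+\sum_{y\in Y}y+2\sum_{x\in X}x$. That is your construction with specific offsets in place of $L$ and $2L$, and the paper argues exactly as you do. Lead with this reduction, and make three points explicit that your sketch glosses over.
\begin{enumerate}
\item Choose $L$ so the three layers are strictly separated, e.g.\ $L>\max_i x_i+\max_j y_j$.
\item Use the positional weights $\omega_{[i,\ell]}=\ell$ to show that \emph{every} $\sum_j C_j$-optimal schedule, not only the one output by SPT-FAM, places the $Z$-, $Y$- and $X$-jobs in the last, second-to-last and first positions, one of each per machine.
\item Assume without loss of generality that $\sum_{x\in X}x+\sum_{y\in Y}y+\sum_{z\in Z}z=mb$. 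Otherwise, ``all machines finish by $3L+b$'' only gives $x+y+z\le b$ per triple, not equality. The paper uses this tacitly when it says the total processing time equals $nD$.
\end{enumerate}
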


\begin{proof}
	Let us consider the following problem which is known to be $\mathcal{NP}$-complete in the strong sense \citep{GareyJohnson}:

	\texttt{Numerical 3-Dimensional Matching (NUM-3DM)}

	\begin{tabular}{p{0.1\textwidth}p{0.9\textwidth}}
		\underline{Data:}     & Three disjoint sets of integers $X=\{x_1, \ldots , x_n\}$, $Y=\{y_1, \ldots , y_n\}$, and $Z=\{z_1, \ldots , z_n\}$ along with a bound $b$.                                                                                                                           \\
		\underline{Question:} & Can $X \cup Y \cup Z$ be partitioned into $n$ disjoint sets $A_1, A_2, \hdots, A_n$ such that each $A_h$ $(h=1, \ldots ,n)$ contains exactly one element from each of $X$, $Y$, and $Z$, i.e., $A_h=\{x_{[h]},y_{[h]},z_{[h]}\}$ and for $1 \leqslant h \leqslant n$,
		$x_{[h]}+y_{[h]}+z_{[h]}=b$?                                                                                                                                                                                                                                                                  \\
	\end{tabular}

	We show that this problem reduces to the decision version of problem $P||Lex \left( \sum_j C_j,\sum_j U_j \right)$:
	\\
	\texttt{DEC}

	\begin{tabular}{p{0.1\textwidth}p{0.8\textwidth}}
		\underline{Data:}     & A set of jobs $J_j$ with a processing time $p_j$ and a due date $d_j$, to be processed on $m$ identical machines. Let $C^{\star}$ be the value of an optimal solution of $P||\sum_j C_j$ and $U$ a given value. \\
		\underline{Question:} & Does there exist a schedule $\sigma$ such that $\sum_j C_j(\sigma) = C^{\star}$ and $\sum_j U_j(\sigma) \leq U$?                                                                                                \\
	\end{tabular}

	We show that there exists a reduction $\propto$ from \texttt{NUM-3DM} to \texttt{DEC}. Let $I$ be a given instance of \texttt{NUM-3DM} and let $\propto(I)$ be the corresponding instance of \texttt{DEC}. We have: $U=0$ and given three multisets of integers $X$, $Y$, and $Z$, each containing $n$ elements, we create $3n$ jobs with the following processing times $\forall i=1, \hdots,n:$

	$p_{i} = x_i$ , $p_{n+i} = y_i + \sum_{x \in X} x$ , $p_{2n+i} = z_i + \sum_{y \in Y} y +\sum_{x \in X} x$ and a common due date \\$D = b + \sum_{y \in Y} y +2 \sum_{x \in X} x$.
		Furthermore, we introduce $m=n$ machines.

		Since we have $\forall i,j,k=1, \hdots,n \; , p_i < p_{n+j} < p_{2n+k}$, $\sum_j C_j$ is minimized by scheduling exactly one triplet $(x_{[h]}, y_{[h]}, z_{[h]})= (x_{i}, y_{j}, z_{k})$, with $i,j,k=1, \hdots,n$, on each machine $M_h$ $(h=1, \ldots , n)$ (see Figure \ref{fig:machineSchedulesP||Lex(Sum-Cj,Sum-wj)}).
		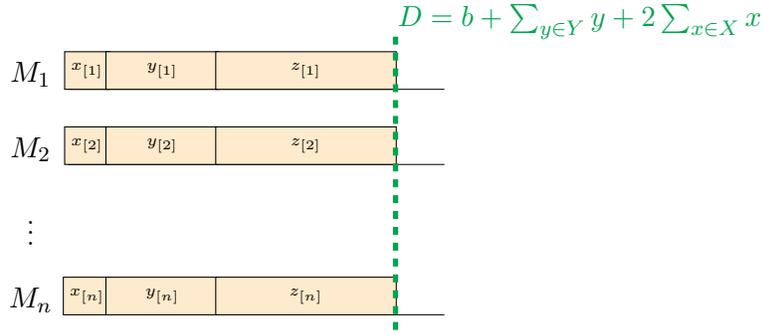
\begin{figure}[h]
			\centering
			\scalebox{1}{%
				\begin{tikzpicture}
					% draw machines
					\draw (0,0) node {$M_1$};
					\draw[ultra thin] (0.5,-0.2) -- (5.5,-0.2);
					\draw (0,-1) node {$M_2$};
					\draw[ultra thin] (0.5,-1.2) -- (5.5,-1.2);
					\draw (0,-2) node {$\vdots$};
					\draw (0,-3) node {$M_{n}$};
					\draw[ultra thin] (0.5,-3.2) -- (5.5,-3.2);

					% draw jobs on M1 machine
					\draw (0.75,0.05) node [rectangle,draw,fill=YellowOrange!20,minimum width = 5mm,minimum height = 5mm,node font=\tiny] {$x_{[1]}$};

					\draw (1.75,0.05) node [rectangle,draw,fill=YellowOrange!20,minimum width = 15mm,minimum height = 5mm,node font=\tiny] {$y_{[1]}$};

					\draw (3.66,0.05) node [rectangle,draw,fill=YellowOrange!20,minimum width = 24mm,minimum height = 5mm,node font=\tiny] {$z_{[1]}$};

					% draw jobs on M2 machine
					\draw (0.75,-0.95) node [rectangle,draw,fill=YellowOrange!20,minimum width = 5mm,minimum height = 5mm,node font=\tiny] {$x_{[2]}$};

					\draw (1.75,-0.95) node [rectangle,draw,fill=YellowOrange!20,minimum width = 15mm,minimum height = 5mm,node font=\tiny] {$y_{[2]}$};

					\draw (3.66,-0.95) node [rectangle,draw,fill=YellowOrange!20,minimum width = 24mm,minimum height = 5mm,node font=\tiny] {$z_{[2]}$};

					% draw jobs on Mn machine
					\draw (0.75,-2.95) node [rectangle,draw,fill=YellowOrange!20,minimum width = 5mm,minimum height = 5mm,node font=\tiny] {$x_{[n]}$};

					\draw (1.75,-2.95) node [rectangle,draw,fill=YellowOrange!20,minimum width = 15mm,minimum height = 5mm,node font=\tiny] {$y_{[n]}$};

					\draw (3.66,-2.95) node [rectangle,draw,fill=YellowOrange!20,minimum width = 24mm,minimum height = 5mm,node font=\tiny] {$z_{[n]}$};

					% draw due date
					\draw[line width=2pt, dashed,Green] (4.85,0.5) -- (4.85,-3.5);

					\draw[Green] (7.3,0.7) node {$D=b + \sum_{y \in Y} y +2 \sum_{x \in X} x$};
				\end{tikzpicture}
			}
			\caption{Machine schedules for the $P||Lex \left( \sum_j C_j,\sum_j U_j \right)$}
			\label{fig:machineSchedulesP||Lex(Sum-Cj,Sum-wj)}
		\end{figure}

		We demonstrate that the NUM-3DM problem's answer is true if and only if all jobs can be scheduled in time.

		Let us demonstrate the forward implication ($\Rightarrow$). Assuming the solution to the NUM-3DM problem is true implies the existence of $n$ disjoint triples $A_{h} = \{x_i,y_j,z_k\}$ such that $x_i+y_j+z_k=b$. Consequently, we schedule the jobs $\{p_i,p_{n+j},p_{2n+k}\}$ on the machine $M_{h}$. The completion time of the last job on machine $M_{h}$ is:

	$C_{M_h} = x_i+y_j+z_k+\sum_{y \in Y} y +2 \sum_{x \in X} x = b+\sum_{y \in Y} y +2 \sum_{x \in X} x=D$ ensuring all jobs are scheduled in time.
		Now, we establish the reverse implication ($\Leftarrow$).
		Suppose that there exists a schedule $\sigma$ such that $\sum_j C_j(\sigma) = C^{\star}$ and $\sum_j U_j(\sigma) \leq U=0$. Because of the choice of the processing times, we see that each machine $M_{h}$ $(h=1, \ldots , n)$ executes exactly one job from the sets $\{J_1, \ldots , J_n\}$, $\{J_{n+1}, \ldots , J_{2n}\}$, and $\{J_{2n+1}, \ldots , J_{3n}\}$. Furthermore, since the total processing time of all jobs is equal to $nD$ and all jobs are on time, all machine must finish exactly at time $D$. A feasible solution to the NUM-3DM problem is immediately obtained by constructing triplet $h$ $(h=1, \ldots ,n)$ according to the indices of the jobs executed by machine $M_h$.
\end{proof}

We can also derive the following result.

\begin{corollary}\label{coro:P||LexSumCj,SumUj-unapprox}
	The $P||Lex \left( \sum_j C_j,\sum_j U_j \right)$ problem is not approximable in polynomial or pseudo-polynomial time unless $\cal{P} = \cal{NP}$.
\end{corollary}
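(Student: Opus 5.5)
The plan is to exploit the fact that the reduction in the proof of Theorem~\ref{thm:P||LexSumCj,SumUj-NP-hard} is a \emph{gap-producing} reduction with target value $U=0$. Suppose, for contradiction, that there is a polynomial or pseudo-polynomial time algorithm $\mathcal{A}$ with some finite approximation ratio $\rho \geqslant 1$ (constant, or even any function of the input). I read the second criterion in the standard way: among all schedules that are optimal for $\sum_j C_j$, $\mathcal{A}$ returns one whose value $\sum_j U_j$ is at most $\rho$ times the minimum of $\sum_j U_j$ over that set.

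Given an instance $I$ of \texttt{NUM-3DM}, I will build the instance $\propto(I)$ exactly as in the proof of Theorem~\ref{thm:P||LexSumCj,SumUj-NP-hard}. That theorem shows the answer to $I$ is yes if and only if there is a $\sum_j C_j$-optimal schedule with $\sum_j U_j = 0$. Hence, on a yes-instance the optimal lexicographic value of the second criterion is $0$, and $\mathcal{A}$ must return a schedule with $\sum_j U_j \leqslant \rho \cdot 0 = 0$, i.e.\ a schedule in which all jobs are on time. On a no-instance, every $\sum_j C_j$-optimal schedule has $\sum_j U_j \geqslant 1$, so whatever $\mathcal{A}$ returns has value at least $1$. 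So running $\mathcal{A}$ on $\propto(I)$ and testing whether the output has $\sum_j U_j=0$ decides $I$.

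It remains to check the running time. The numbers in $\propto(I)$ are sums of the input integers, so they are polynomially bounded in the magnitude of the largest number of $I$ and in $n$; the number of jobs is $3n$ and $m=n$. Thus a pseudo-polynomial algorithm for the scheduling problem runs in pseudo-polynomial time in $I$. Since \texttt{NUM-3DM} is strongly $\mathcal{NP}$-complete, a pseudo-polynomial algorithm for it would give $\mathcal{P}=\mathcal{NP}$, and a polynomial $\mathcal{A}$ is covered a fortiori.

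The only delicate point I expect is the definition of approximation for a lexicographic objective with optimum $0$. I will make explicit that the first criterion must be met exactly (the $C^\star$ constraint) and that a multiplicative guarantee on a zero optimum forces exactness. This is what makes the argument work for any ratio $\rho$, so I will state that assumption clearly rather than any additive notion.
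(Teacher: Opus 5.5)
Your proposal is correct and follows the same route as the paper: the reduction in the proof of Theorem~\ref{thm:P||LexSumCj,SumUj-NP-hard} shows that deciding whether the lexicographic optimum has $\sum_j U_j^{\star}=0$ is strongly $\mathcal{NP}$-hard. Hence any algorithm with a finite ratio $\rho=\sum_j U_j/\sum_j U_j^{\star}$ would return an all-on-time schedule on yes-instances of \texttt{NUM-3DM}, and could not do so on no-instances. You also make explicit two points the paper leaves implicit: that a multiplicative guarantee forces exactness when the optimum is $0$ (with the $\sum_j C_j$ criterion met exactly), and that the numbers in the reduced instance stay polynomially bounded, so the pseudo-polynomial case is covered as well.
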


\begin{proof}
	This result directly follows from the proof of Theorem \ref{thm:P||LexSumCj,SumUj-NP-hard} where it is shown that the particular problem when $\sum_j U_j^{\star}=0$ is strongly $\cal{NP}$-hard. This rules out the existence of an approximation algorithm for ratio $\rho=\frac{\sum_j U_j}{\sum_j U_j^{\star}}$
\end{proof}
From Theorem \ref{thm:P||LexSumCj,SumUj-NP-hard} and Corollary \ref{coro:P||LexSumCj,SumUj-unapprox}, the following result immediately holds (as the considered problem is a generalization of the $P||Lex \left( \sum_j C_j,\sum_j U_j \right)$ problem).

\begin{corollary}\label{corol:followerProblemComplexity}
	The $Q | V_i \in \{V_{0},V_{1}\}| Lex \left( \sum_j C_j,\sum_j w_jU_j \right)$ problem is $\mathcal{NP}$-hard in the strong sense and is not approximable in polynomial or pseudo-polynomial time unless $\cal{P} = \cal{NP}$.
\end{corollary}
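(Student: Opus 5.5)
The plan is to prove Corollary~\ref{corol:followerProblemComplexity} by specializing, and to check that both conclusions of Theorem~\ref{thm:P||LexSumCj,SumUj-NP-hard} and Corollary~\ref{coro:P||LexSumCj,SumUj-unapprox} carry over to the two-speed follower problem.

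First, I would show that $P||Lex\left(\sum_j C_j,\sum_j U_j\right)$ is a special case of $Q|V_i\in\{V_0,V_1\}|Lex\left(\sum_j C_j,\sum_j w_jU_j\right)$. Take any instance of the identical-machine problem on $m$ machines. Map it to the two-speed problem with $V_0=V_1=1$, for instance $m_1=m$ and $m_0=0$; if an empty class is not allowed, split the machines as $m_1=m-1$ and $m_0=1$ with equal speeds. Set $w_j=1$ for every job and keep the processing times and due dates unchanged.

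Under this map, processing times on every machine equal $p_j$. Hence the sets of schedules, the completion times, $\sum_j C_j$ and $\sum_j w_jU_j=\sum_j U_j$ all coincide. In particular, $C^\star$ is the same in both instances, and the lexicographic optima are the same schedules. The map is clearly polynomial and keeps all numbers unary-bounded. So the strong $\mathcal{NP}$-hardness from Theorem~\ref{thm:P||LexSumCj,SumUj-NP-hard}, obtained via \texttt{NUM-3DM}, transfers directly.

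For inapproximability, I would use the same embedding applied to the instances built in the proof of Theorem~\ref{thm:P||LexSumCj,SumUj-NP-hard}. Those instances have numbers polynomially bounded in the unary size of the \texttt{NUM-3DM} input. Deciding whether the optimal second criterion is $0$ is strongly $\mathcal{NP}$-hard. Any algorithm with a finite ratio $\rho$, running in polynomial or pseudo-polynomial time, would return a value of $0$ exactly when the optimum is $0$, because $\rho\cdot 0=0$. Since pseudo-polynomial time is polynomial on these instances, such an algorithm would decide \texttt{NUM-3DM} in polynomial time, which implies $\mathcal{P}=\mathcal{NP}$.

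The only point needing care is the convention for the two speeds. If the notation requires $V_0<V_1$ strictly, the identical-speed embedding is not allowed, and I would instead use a fast class with $m$ machines and a slow class of a single machine with very small speed $V_0$. I would then argue that in every $\sum_j C_j$-optimal schedule the slow machine stays empty. Scheduling any job there costs at least $p_{\min}/V_0$, while adding a job to a fast machine at the LPT positions costs at most $n\,p_{\max}/V_1$. Choosing $V_0<V_1p_{\min}/(n\,p_{\max})$ enforces this with polynomially bounded encoding. I expect this adjustment to be the only potential obstacle; the rest is immediate.
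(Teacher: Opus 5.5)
Your proposal is correct and takes the same route as the paper. The paper derives the corollary directly from Theorem~\ref{thm:P||LexSumCj,SumUj-NP-hard} and Corollary~\ref{coro:P||LexSumCj,SumUj-unapprox}, by treating $P||Lex(\sum_j C_j,\sum_j U_j)$ as the special case of the two-speed problem with equal speeds and unit weights. Your fallback for a strict $V_0<V_1$ convention, where a single very slow machine stays empty in every $\sum_j C_j$-optimal schedule, is a valid refinement that the paper does not address; note that $V_0<V_1/n$ already suffices, because the moved job's $p_j$ appears on both sides of the cost comparison.
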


\subsubsection{The bilevel problem} \label{sec:Def-and-Ppt-Bilevel-pb}

From the reductions of the previous section, we can derive that the bilevel problem is at least $\cal{NP}$-hard in the strong sense. With respect to the polynomial hierarchy in complexity theory, the bilevel problem cannot be complete with respect to class $\Sigma^p_2$ of level $3$ due to the existence of the MIP formulation in  Section~\ref{subsec:mipmodel} \citep{woegingerTroubleSecondQuantifier2021}: consequently, the bilevel is $\cal{NP}$-hard in the strong sense. 

Here, we show that when $m$ is fixed, the problem is weakly $\cal{NP}$-hard by providing a moderately exponential-time dynamic programming algorithm based on the block structure of an optimal schedule as described in Section~\ref{subsec:BlockStruct}. The main idea of the dynamic programming approach is to fill the blocks one by one with the correct set of jobs. We use state variables that keep track of each possible set of completion times, the set of machines that are available, the current block, and the currently considered job. Thus, we try to assign the job to each available machine in the current block; once it is assigned, we can update the corresponding machine's completion time and proceed to the next job. When the block is full, we update the state variables for the next block and continue until all blocks are fulfilled.

% --------------------------
%  -------------------------
% 
%	 Dynamic programming	
% 
% -------------------------- 
% --------------------------

We need to select and schedule $n$ jobs from a set $\mathcal{J}$ containing $N$ jobs. Initially, we consider the case where there are no equal-size jobs, i.e., $p_j \neq p_\ell$ for all $j,\ell=1, \hdots,N$, $j\neq\ell$. We index blocks \(\mathcal{B}_b\), which represent sets of pairs (machine index, position on machine schedule), in chronological order with \( 1 \leqslant b \leqslant b_{\max} \), where block $B_{b_{\max}}$ is the final block in the schedule, which contains the final jobs on the machines with speed $V_0$.
Additionally, let $\vec{C}=\left(C_{M_i}\right)_{1 \leqslant i \leqslant m}$ denote the vector of machine completion times. To facilitate reading, we define $\omega(\vec{C},i,j)=\begin{cases}
		w_j \text{ if } C_{M_i}+p_j/V_i > d_j \\
		0 \text{ otherwise}
	\end{cases},$

and $M(\mathcal{B}_b) = \left\{i / \exists (i, k) \in \mathcal{B}_b\right\}$ as the set of machine indices in block $\mathcal{B}_b$. We also define $\vec{C'}=\vec{C} \oplus (i,p_j)$ as $ {C'}_{M_i} = C_{M_i}+p_j/V_i$ and ${C'}_{M_{\ell}} = C_{M_{\ell}}$ for $\ell \neq i$.

In our dynamic programming algorithm we work from behind. We assume that we have a schedule that follows the block structure in which each machine $M_i$ $(i=1, \ldots , m)$ has a given completion time $C_{M_i}$; the number of such possible completion times amounts to $\mathcal{O}(\sum p_j)$ per machine. In each step we select one job to fill a position on one of the machines in the current block (starting with block $b_{\max}$), until this block is full, after which we continue with the next block. To facilitate notation, we denote the set of {\em available} machines in the current step of the dynamic program by $\mathcal{A}\subset \mathcal{V}_0 \cup \mathcal{V}_1$. Here a machine is available, if it has a position in the currently considered block $\mathcal{B}_b$ to which we have not yet assigned a selected job.

We define $OPT[\vec{C},b,\mathcal{A},g]$ as the optimal value of the total weighted number of tardy jobs in the bilevel problem, given that all machine completion times are equal to $\vec{C}$, the next job will be scheduled on the current block $\mathcal{B}_b$, with the available machines $\mathcal{A}$ and where $J_g$ is the smallest indexed job that the follower can schedule. Remark here that $OPT[\vec{C},b,\mathcal{A},g]$ only accounts for the cost of the jobs that have been selected.
Since, all blocks, except maybe the first one, must be completely filled, we have that the block $\mathcal{B}_b$ with the set of available machines $\mathcal{A}$ is completely filled when $\mathrm{full}(b,\mathcal{A})$ is true with \\ $\mathrm{full}\left(b,\mathcal{A}\right) \iff \begin{cases}
		|\mathcal{A}| = n - \sum_{2 \leqslant b' \leqslant b_{\max}} |B_{b'}| & \text{if} ~ b=1 \\
		|\mathcal{A}| = 0 ~(i.e. ~ \mathcal{A} = \emptyset)                   & \text{if} ~ b>1 \\
	\end{cases}$.

We initialize the recursion with $OPT[\vec{C},b,\mathcal{A},g]=+\infty$ whenever $g>N$, and $OPT[\vec{C},b,\mathcal{A},g]=0$ if $b = b_{\max}$ and $\mathrm{full}(b,\mathcal{A})$ is true; this enforces that we have a schedule containing $n$ (yet unselected) jobs. Our recurrence relation is then

\begin{equation}
	OPT[\vec{C},b,\mathcal{A},g]= \begin{cases}
		\min\limits_{\substack{i \in \mathcal{A}                                                                                                          \\ g \leqslant j \leqslant N}}
		\left(w(\vec{C},i,j) + OPT[\vec{C} \oplus (i, p_j), b, \mathcal{A}\setminus\{i\}, j+1]\right) \text{ if $\mathrm{full}(b,\mathcal{A})$ is false } \\
		OPT[\vec{C},b+1,M(\mathcal{B}_{b+1}),g] ~ \text{ otherwise }
	\end{cases}
\end{equation}
\noindent
The first line corresponds to the case that the current block is not completely full. Hence, we select the best job $J_j$ with $g \leqslant j \leqslant N$ to be scheduled on the best available machine $M_i \in \mathcal{A}$. In the second line, the current block is completely filled, and we need to pass to the next block.

The value of the optimal solution is given by $OPT[\vec{0},1,M(\mathcal{B}_1),1]$.

Next, we focus on the general case with arbitrary processing times. We apply the formula described above and when we face $k$ equal-size jobs, we apply an algorithm that selects $\ell$ jobs, with $\ell =0, \hdots, k$. The main idea of the algorithm is that these $\ell$ jobs can completely fill the current block $\mathcal{B}_b$ and some more, and partially fill a last block. Consequently, we need to choose which machines on the last block will be selected; we enumerate all these possibilities. For each choice, we can determine the machine completion times since we have $\vec{C}$ and $\ell$ equal-size jobs with a processing time $p_j$. Then, we can find the best set $\ell$ out of these $k$ equal-size jobs by applying the algorithm that we used to prove Theorem~\ref{thm:Solve-Q|pj=p|Lex(Sum-Cj,Sum-wj)} in Section~\ref{subsubsec:FollowerPb}. Note that we can update the dynamic programming state when we use this algorithm for any value of $\ell$ and any choice of selected machines on the last block. This general case multiplies the time complexity by a factor of $2^m$ since we have to make $\mathcal{O}(2^m)$ choices and by a factor of $n \log n$ for the time complexity of the jobs' assignment.

Therefore, this dynamic programming algorithm requires $\mathcal{O} \left( 2^m \left(\sum_{j=1}^{N}p_j \right)^m \, N \, n^2 \log n \right)$ time and space. Correspondingly, the following theorem holds.

\begin{theorem}\label{thm:bilevelProblemComplexity}
	The $Q | V_i \in \{V_{0},V_{1}\},OPT-n| Lex \left( \sum_j C_j^F,\sum_j w_jU_j^L \right)$ problem is $\mathcal{NP}$-hard in the strong sense and when the number of machines $m$ is fixed it turns to be $\mathcal{NP}$-hard in the ordinary sense.
\end{theorem}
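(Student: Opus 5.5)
The statement has two parts, and I would prove them separately.

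\emph{Strong $\mathcal{NP}$-hardness for arbitrary $m$.} I would reduce from \texttt{NUM-3DM}, reusing the construction in the proof of Theorem~\ref{thm:P||LexSumCj,SumUj-NP-hard}. Given an instance with sets $X,Y,Z$ and bound $b$, I build the $3n$ jobs with processing times
\[
p_i=x_i,\qquad p_{n+i}=y_i+\sum_{x\in X}x,\qquad p_{2n+i}=z_i+\sum_{y\in Y}y+\sum_{x\in X}x,
\]
the common due date $D=b+\sum_{y\in Y}y+2\sum_{x\in X}x$, unit weights, and $m=n$ machines. All machines go in $\mathcal{V}_1$, with $m_0=0$ (or with $V_0=V_1$ if both classes must be nonempty). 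I set $N=n'=3n$, so the leader's selection is forced to $\mathcal{I}=\mathcal{J}$. The bilevel problem then coincides with the follower's lexicographic problem, and the decision question becomes whether the optimal leader value is $0$.

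The forward and reverse implications are exactly those of Theorem~\ref{thm:P||LexSumCj,SumUj-NP-hard}. In any $\sum_j C_j$-optimal schedule, each machine receives one job from each of the three groups, because of the block structure of Section~\ref{subsec:BlockStruct}. Zero tardiness then forces every machine load to equal $D$, since the total processing time is $nD$. All numbers are polynomially bounded in the unary size of the \texttt{NUM-3DM} instance, which gives strong hardness. I also need to confirm that optimistic tie-breaking causes no issue. It does not: because $\mathcal{I}$ is fixed, the optimistic follower returns a schedule with $\sum_j U_j=0$ among the $\sum_j C_j$-optimal schedules whenever one exists. This matches the reduction's question exactly.

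\emph{Ordinary hardness for fixed $m$.} This part needs two halves.
\begin{enumerate}
\item Hardness. I apply the same embedding with $N=n$ to the reduction of Theorem~\ref{thm:P2||Lex(SumCj,SumUj)-m-fixed-NP-hard}, the \texttt{Even-Odd Partition} instance on two identical machines. This shows the problem is $\mathcal{NP}$-hard already for $m=2$.
\item Pseudopolynomial algorithm. I invoke the dynamic program just described. Its running time is $\mathcal{O}\bigl(2^m(\sum_j p_j)^m N n^2\log n\bigr)$, which is pseudopolynomial when $m$ is fixed. So the problem is not strongly $\mathcal{NP}$-hard for fixed $m$, unless $\mathcal{P}=\mathcal{NP}$.
\end{enumerate}

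The main obstacle is arguing that the dynamic program is correct, in particular that it explores exactly the $\sum_j C_j$-optimal schedules for each leader selection. I would argue this as follows. The block structure characterises all optimal follower schedules. Jobs indexed in SPT order fill the blocks monotonically, which enforces the non-decreasing processing-time constraint between consecutive blocks. Within a block, every assignment of the selected jobs to the available positions is enumerated, so the optimistic minimum over permutations is attained. Equal-size jobs, which can straddle several blocks, are handled by the enumeration over the $\mathcal{O}(2^m)$ partially filled last blocks combined with the algorithm of Theorem~\ref{thm:Solve-Q|pj=p|Lex(Sum-Cj,Sum-wj)}.

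I would also check that the completion-time states are bounded. Each $C_{M_i}$ equals a partial sum of integer processing times divided by $V_i$. Hence there are $\mathcal{O}(\sum_j p_j)$ values per machine, which keeps the state space pseudopolynomial.
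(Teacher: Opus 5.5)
Your proposal is correct and follows the same route as the paper: taking $n=N$ makes the bilevel problem coincide with the follower's problem, so strong $\mathcal{NP}$-hardness is inherited from the \texttt{NUM-3DM} reduction, and for fixed $m$ hardness comes from the two-machine \texttt{Even-Odd Partition} reduction while pseudo-polynomial solvability comes from the dynamic program. You spell out details that the paper's two-line proof leaves implicit: unit weights, taking $m_0=0$ or $V_0=V_1$, the harmlessness of optimistic tie-breaking, and the $\mathcal{O}(\sum_j p_j)$ bound on the number of completion-time values per machine.
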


\begin{proof}
	We have shown these results for the follower's problem, which is a special case of the bilevel problem, where the leader selects $n=N$ jobs. In the scenario where the number of machines is fixed, the moderate-exponential worst-case time complexity dynamic programming runs in pseudo-polynomial time.
\end{proof}

\subsubsection{A related polynomially solvable bilevel scheduling problem}
As a side result, we consider the adversarial bilevel optimization problem, where the follower still minimizes $\sum_j C_j^F$. In this scenario, the leader takes decisions so that the optimal solution of the follower's problem is the worst possible. For the rest of the section, we assume that the jobs are sorted according the LPT rule.

\begin{theorem}
	\label{bilevelADVprobComplexity}
	The $Q | ADV-n | \sum_{j} C_j^F$ problem can be solved in $\mathcal{O}(N \log(N))$ time as follows:

	\begin{enumerate}
		\item The leader selects the $n \leqslant N$ jobs with the largest processing times,
		\item The follower applies the SPT-FAM rule, on these n jobs.
	\end{enumerate}
\end{theorem}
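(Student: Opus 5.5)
To prove the theorem, I would argue that the leader can do no better (for the adversarial objective of maximizing the follower's optimal $\sum_j C_j$) than taking the $n$ longest jobs. I then account for the running time.

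First, fix the machine configuration. By the positional-weight argument of Section~\ref{subsec:BlockStruct}, the optimal follower value for any selected set $\mathcal{I}$ is
\[
F(\mathcal{I})=\min_{\pi}\sum_{k=1}^{n}\omega_{[k]}\,p_{\pi(k)},
\]
where $\omega_{[1]}\leqslant\omega_{[2]}\leqslant\hdots$ are the $n$ smallest positional weights $\ell/V_i$. These weights depend only on $n$ and the speeds, not on which jobs are chosen. The minimum is attained by pairing the $k$th smallest weight with the $k$th largest processing time of $\mathcal{I}$. Writing $q_1\geqslant q_2\geqslant\hdots\geqslant q_n$ for the sorted processing times of $\mathcal{I}$, this gives $F(\mathcal{I})=\sum_{k=1}^n \omega_{[k]}q_k$. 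Only the $n$ smallest weights ever matter, since extra positions are filled by dummy jobs with $p_0=0$.

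Second comes the monotonicity step. Let $\mathcal{I}^*$ consist of the $n$ jobs with the largest processing times, i.e., the first $n$ jobs in LPT order, with sorted times $p_1\geqslant\hdots\geqslant p_n$. For any other $\mathcal{I}$ with sorted times $q_k$, the $k$th largest element of $\mathcal{I}$ is at most the $k$th largest element of the whole job set, so $q_k\leqslant p_k$ for every $k$. All weights $\omega_{[k]}$ are nonnegative, hence $F(\mathcal{I})\leqslant F(\mathcal{I}^*)$. Therefore $\mathcal{I}^*$ maximizes the follower's optimal value. Because the objective is $\sum_j C_j^F$ itself, the adversarial setting introduces no tie issue: every follower-optimal schedule has the same value.

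Third, the follower's response on $\mathcal{I}^*$ is given by SPT-FAM, which is optimal for $Q||\sum_j C_j$ by \cite{conwayTheoryScheduling1967b} and the remark after Theorem~\ref{thm:Solve-Q|Vo,Vmax|Sum-Cj}. For the complexity, sorting the $N$ jobs takes $\mathcal{O}(N\log N)$ time, and selecting the first $n$ is then immediate. SPT-FAM on $n$ jobs with a heap over the $m$ machines costs $\mathcal{O}(n\log m)$ time. If $m>n$, only the $n$ fastest machines matter, so this is within $\mathcal{O}(N\log N)$ after an $\mathcal{O}(m)$ selection; otherwise, as in the paper's convention, $m$ is dominated.

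The main subtlety I expect lies in the second step: justifying the elementwise domination $q_k\leqslant p_k$ and making sure the positional weights are independent of the chosen set. I would handle this by fixing the dummy-job padding of Section~\ref{subsec:BlockStruct}, so that the location set, and hence the weights, is the same for every $n$-subset.
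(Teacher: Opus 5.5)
Your proof is correct and follows essentially the paper's route. Both arguments write the follower's optimum as $\sum_k \omega_{[k]}p_{[k]}$, with positional weights that do not depend on the selected jobs, and then compare sorted processing times componentwise. The only difference is that you compare an arbitrary $n$-subset directly with the top-$n$ set, which also handles ties cleanly, whereas the paper swaps a single job and argues by contradiction.

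One caveat, inherited from the paper rather than introduced by you, concerns your third step. For arbitrary speeds, neither forward reading of SPT-FAM is optimal for $Q||\sum_j C_j$:
\begin{itemize}
\item With speeds $100,1$ and two unit jobs, sending each job to the machine that becomes free first gives $1.01$ instead of $0.03$.
\item With speeds $2,1$ and $p=1,1.1,10$, sending each job to the machine where it completes earliest gives $7.6$ instead of $7.1$.
\end{itemize}
So in your third step you should instead invoke the backward assignment of the jobs, taken in LPT order, to the smallest weights $\ell/V_i$. This runs within the same $\mathcal{O}(N\log N)$ bound.
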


\begin{proof}
	The objective value of any schedule $\sigma_s$ with $n$ jobs determined by the follower can be rewritten as $\sum_{k=1}^n \omega_{[k]}p_{[k]}$, where $\omega_{[k]}$ and $p_{[k]}$ denote the $k$th smallest positional weight and the $k$th largest processing time of the selected jobs, for $k=1, \ldots , n$.

	Let us assume that the leader selects an optimal set $\mathcal{I}_1$ of $n$ jobs such that they are not the ones with the largest processing times. Suppose that the leader selects job $J_{u}$ instead of job $J_v$, whereas $p_{u} < p_v$. Let $\mathcal{I}_2$ denote the set of $n$ jobs that we obtain by replacing job $J_u$ with $J_v$ in $\mathcal{I}_1$. Let $p_{[k]}^1$ and $p_{[k]}^2$ denote the $k$th largest processing time of the jobs in the set $\mathcal{I}_1$ and $\mathcal{I}_2$, respectively. Then we see that $p_{[k]}^1 \leq p_{[k]}^2$ for all $k=1, \ldots , n$, where at least one of the inequalities is strict. Since the positional weights $w_{[k]}$ are positive and independent of the selected jobs, we see for the resulting schedules $\sigma_s^1$ and $\sigma_s^2$ created by the follower that
	\[
		\sum_{j=1}^n C_j(\sigma_s^1) -
		\sum_{j=1}^n C_j(\sigma_s^2) = \sum_{k=1}^n w_{[k]}(p_{[k]}^1-p_{[k]}^2)<0,
	\]
	which contradicts the optimality of the set $\mathcal{I}_1$.
\end{proof}

% --------------------------
%  -------------------------
% 
%	 Complexity	
% 
% -------------------------- 
% --------------------------

\subsection{Properties of the solutions}

In this section, we describe some dominance conditions between two partial solutions. Let $s$ and $s'$ be two partial solutions. We have $s$, that is composed of a partial schedule $\sigma_s$, a set of remaining jobs $\Omega_s$, and a number of jobs $k_s^1$ (respectively $k_s^0$) that must be scheduled on high-speed (respectively low-speed) machines to complete the solution, i.e. $|\sigma_s| + k_s^1 + k_s^0  = n$. Similarly, we have $s'$, that is composed of a partial schedule $\sigma_{s'}$, a set of remaining jobs $\Omega_{s'}$, and a number of jobs $k_{s'}^1$ (respectively $k_{s'}^0$) that must be scheduled on high-speed (respectively low-speed) machines to complete the solution, i.e. $|\sigma_{s'}| + k_{s'}^1 + k_{s'}^0  = n$. Schedules $\sigma_s$ and $\sigma_{s'}$ start at time zero without any unnecessary idle time. We denote by $C_{M_i}(\sigma_s)$ the completion time of machine $M_i$ in any partial solution $s$.

\begin{theorem} \label{thm:dominanceRuleMemo}
	If the following conditions hold:
	\begin{enumerate}
		\item $C_{M_i}(\sigma_s) \leqslant C_{M_i}(\sigma_{s'}) \quad \forall i \in \{1, \hdots, m\}$,
		\item $\sum\limits_{j \in \sigma_s} w_j U_j(\sigma_s) \leqslant \sum\limits_{j \in \sigma_{s'}} w_j U_j(\sigma_{s'})$,
		\item $k_s^1 \leqslant k_{s'}^1$,
		\item $k_s^0 \leqslant k_{s'}^0$,
		\item $\Omega_{s'} \subseteq \Omega_s$,
	\end{enumerate}
	then the best solution $s^*$ reachable from the partial solution $s$ is better than the best solution $s'^*$ reachable from the partial solution $s'$. We say that $s$ dominates $s'$.
\end{theorem}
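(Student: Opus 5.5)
The plan is an exchange argument: take any completion of $s'$ and turn it into a completion of $s$ whose total weighted number of tardy jobs is no larger. Let $s'^*$ be the best solution reachable from $s'$. It consists of the partial schedule $\sigma_{s'}$ followed by a set $R'\subseteq \Omega_{s'}$ of $k_{s'}^1+k_{s'}^0$ jobs. Of these, $k_{s'}^1$ are appended on the high-speed machines and $k_{s'}^0$ on the low-speed machines, each placed on a specific machine in a specific order. I would build a completion of $s$ as follows. By condition (v), $R'\subseteq\Omega_s$, so the jobs of $R'$ are still available to $s$. Since $k_s^1\le k_{s'}^1$ and $k_s^0\le k_{s'}^0$, we may keep, on each speed class, a subset of the jobs appended in $s'^*$ of exactly the required cardinality. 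These jobs are placed on the same machines, in the same relative order, after $\sigma_s$.

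The key monotonicity step comes next. Each kept job $J_j$ is appended to machine $M_i$ after $\sigma_s$ instead of after $\sigma_{s'}$. By condition (i), its start is not later, and deleting the dropped predecessors from the appended sequence only moves it earlier. So its completion time in the new solution is at most its completion time in $s'^*$. Hence, since $U_j$ is nondecreasing in $C_j$ and weights are nonnegative, the kept jobs contribute at most what they contributed in $s'^*$. The dropped jobs only remove nonnegative terms. Adding condition (ii) for the fixed parts gives $\sum w_jU_j(\tilde s)\le \sum w_jU_j(s'^*)$ for the constructed completion $\tilde s$. Therefore $s^*\le \tilde s\le s'^*$.

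The main obstacle is feasibility in the bilevel sense, not the cost comparison. A "solution reachable from $s$" must be a schedule that the optimistic follower would actually return: it must respect the block structure of Section~\ref{subsec:BlockStruct}, so it must be $\sum_j C_j$-optimal. Dropping jobs or shifting them could break the SPT-FAM/block order. I would handle this by interpreting the partial solutions, as in the branch-and-bound, as built block by block with $k^1,k^0$ encoding the remaining block positions. I would then argue that choosing the kept jobs as the ones occupying the \emph{last} $k_s^1$ (resp. $k_s^0$) remaining positions of each speed class preserves the nondecreasing-processing-time order across blocks required by constraint~(\ref{eq:incrPjByBlock}). If $k_s^\cdot=k_{s'}^\cdot$ in the intended use, which is the typical memoization case, no jobs are dropped and the block pattern is identical, so optimality for $\sum C_j$ is inherited directly. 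In that case only the lexicographic tie-breaking, which favours smaller $\sum w_jU_j$, needs to be invoked to conclude that the follower returns a schedule at least as good as $\tilde s$.
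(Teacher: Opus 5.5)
Your first two paragraphs are the paper's argument. You graft the best completion of $s'$ onto $\sigma_s$, using (v) for availability of the jobs, (iii)--(iv) to keep the right number per speed class, (i) for earlier start times and (ii) for the fixed part. The paper routes this through an auxiliary best completion of $\sigma_s$ drawn from $\Omega_{s'}$. It also silently compares $\sigma_s\|\alpha_{s'}^1\cup\alpha_{s'}^0$, which carries $k_{s'}^1+k_{s'}^0$ appended jobs, with a completion of only $k_s^1+k_s^0$ jobs, so your explicit trimming and explicit use of (ii) are more careful. The paper never raises block-structure feasibility: it treats a completion as any placement of $k^1$ jobs on fast and $k^0$ jobs on slow machines. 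Under that reading your argument is complete.

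The gap is in your last paragraph, where you do raise feasibility. The claim that for $k_s^1=k_{s'}^1$, $k_s^0=k_{s'}^0$ ``the block pattern is identical'' is false. $s$ and $s'$ can sit in the same block with different sets of available machines: say $M_a$ is free and $M_b$ already filled in $s$, and the reverse in $s'$, both of the same speed. Keeping every job of $s'^*$ on its machine then puts two jobs of that block on $M_b$ and none on $M_a$. So $\tilde s$ violates the block structure, is in general not $\sum_j C_j$-optimal, and is not a completion of $s$. Moving that job to $M_a$ instead is not covered by your monotonicity step, because (i) compares $C_{M_a}(\sigma_s)$ only with $C_{M_a}(\sigma_{s'})$, not with the job's start time $C_{M_b}(\sigma_{s'})$ in $s'^*$.

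The missing ingredient is a consequence of SPT order across blocks. Within the current block, a machine already filled finishes no earlier than any same-speed machine still available; to see this, pair the $k$th-from-last jobs of the two machines, the filled machine's always lying in a later block. Machine-wise (i) implies componentwise domination of the sorted completion-time vectors within each speed class. So you can match the machines of $s$ and $s'$ by rank within each speed class, available machines first, and transfer the tails of $s'^*$ along this matching. Every kept job then starts no later, and the same matching handles trimming when $k_s^1<k_{s'}^1$ or $k_s^0<k_{s'}^0$. If $s$ is in a strictly later block than $s'$, the identity matching already suffices. This rank matching is also what the paper's memo structure, which stores completion times sorted per speed class, implicitly relies on.
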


\begin{proof}
	Let $(\sigma_s || \alpha_s^1 \cup \alpha_s^0 )$ be the best schedule that begins with $\sigma_s$ where $\alpha_s^1$ is a partial schedule on high-speed machines and $\alpha_s^0$ is a partial schedule on low-speed machines. Both are composed only of jobs coming from $\Omega_s$ and $|\alpha_s^1| = k_s^1$ and $|\alpha_s^0|=k_s^0$. We say that $\alpha_s^1 \cup \alpha_s^0$ is the best complementary schedule from $\Omega_{s}$ containing $k_s^1+k_s^0$ jobs.

	We denote by $\tilde{\alpha}_s^1 \cup \tilde{\alpha}_s^0$ the best complementary schedule to $\sigma_s$ with $k_s^1+k_s^0$ jobs from $\Omega_{s'}$. As we have $\Omega_{s'} \subseteq \Omega_s$, we obtain:

	\begin{equation}\nonumber
		\sum\limits_{\mathclap{j \in \left(\sigma_s || \alpha_s^1 \cup \alpha_s^0 \right)} } w_j U_j(\sigma_s || \alpha_s^1 \cup \alpha_s^0) \leqslant \sum\limits_{\mathclap{j \in \left( \sigma_s || \tilde{\alpha}_s^1 \cup \tilde{\alpha}_s^0 \right)}} w_j U_j(\sigma_s || \tilde{\alpha}_s^1 \cup \tilde{\alpha}_s^0)
	\end{equation}

	Let $\alpha_{s'}^1 \cup \alpha_{s'}^0$ be the best complementary schedule to $\sigma_{s'}$ with $k_{s'}^1 + k_{s'}^0$ jobs from $\Omega_{s'}$, and as we have $k_s^1 \leqslant k_{s'}^1$ and $k_s^0 \leqslant k_{s'}^0$, scheduling $\alpha_{s'}^1 \cup \alpha_{s'}^0$ after $\sigma_s$ guarantees that jobs in $\alpha_{s'}^1$ are executed on high-speed machines, while those in $\alpha_{s'}^0$ are executed on low-speed machines. We have:

	\begin{equation} \nonumber
		\sum\limits_{\mathclap{j \in \left(\sigma_s || \alpha_{s'}^1 \cup \alpha_{s'}^0 \right)} } w_j U_j(\sigma_s || \alpha_{s'}^1 \cup \alpha_{s'}^0) \geqslant  \sum\limits_{\mathclap{j \in \left( \sigma_s || \tilde{\alpha}_s^1 \cup \tilde{\alpha}_s^0 \right)}} w_j U_j ( \sigma_s ||\tilde{\alpha}_s^1 \cup \tilde{\alpha}_s^0) \geqslant \sum\limits_{\mathclap{j \in \left( \sigma_s || \alpha_s^1 \cup \alpha_s^0 \right)}} w_j U_j(\sigma_s || \alpha_s^1 \cup \alpha_s^0)
	\end{equation}

	Since the completion times of the machines in $s$ are smaller than those in $s'$, i.e. $\forall i \in \{1, \hdots, m\}, \, \, C_i(s) \leqslant C_i(s')$, we have:

	\begin{equation} \nonumber
		\sum\limits_{\mathclap{j \in \left(\sigma_{s'} || \alpha_{s'}^1 \cup \alpha_{s'}^0 \right)} } w_j U_j(\sigma_{s'} || \alpha_{s'}^1 \cup \alpha_{s'}^0)  \geqslant \sum\limits_{\mathclap{j \in \left( \sigma_s || \alpha_{s'}^1 \cup \alpha_{s'}^0 \right)}} w_j U_j(\sigma_s || \alpha_{s'}^1 \cup \alpha_{s'}^0) \geqslant \sum\limits_{\mathclap{j \in \left( \sigma_s || \alpha_s^1 \cup \alpha_s^0 \right)}} w_j U_j(\sigma_s || \alpha_s^1 \cup \alpha_s^0)
	\end{equation}

	So, $s$ dominates $s'$.
\end{proof}

\begin{remark}

For the leader's problem which is a selection problem, we could think about Knapsack-like dominance conditions, e.g. given two jobs $J_j$ and $J_{\ell}$, $J_j$ dominates $J_{\ell}$ if and only if $p_j \leqslant p_{\ell}$ and $d_j \geqslant d_{\ell}$ and $w_j \leqslant w_{\ell}$.
Such a condition would possibly hold in a single level optimization problem. However, in our bilevel setting, the following counterexample holds, where the leader has to select two jobs, with one machine of speed $V=1$ and three jobs defined as follow:

\begin{table}[ht!]
	\centering
	\begin{tabular}{cccc}
		\hline
		Job & $p_j$ & $d_j$ & $w_j$ \\
		\hline
		1   & 1     & 7     & 4     \\
		2   & 2     & 2     & 3     \\
		3   & 3     & 5     & 5     \\
		\hline
	\end{tabular}
\end{table}

Here, the Knapsack-like dominance condition would indicate that  $J_1$ dominates $J_3$ and, correspondingly,  the schedule ($J_2$ $J_3$) should be worse than ($J_1$ $J_2$). However, $s=(J_2 \; J_3)$ is better than $s=(J_1 \; J_2)$ with a weighted number of tardy jobs equal to 0 against 3. Indeed, the follower's block structure imposes a specific assignment for the jobs that may invalidate the above dominance condition.
\end{remark}

% --------------------------------------------
% 				EXACT ALGORITHMS
% --------------------------------------------

\section{A branch-and-bound algorithm}
\label{sec:Exact_algorithm}

%\subsection{A branch and bound algorithm}

\subsection{Branching scheme}

We propose a branch-and-bound algorithm that relies on a lower bound based on column generation. We use a branching strategy based on the block structure property of the follower's problem, i.e. we branch on the blocks in the order $\mathcal{B}_1,\mathcal{B}_2,\hdots,\mathcal{B}_{b_{\max}}$. At a given node, we have taken decisions on jobs $J_1,\hdots,J_{j-1}$, so we need to take a decision for job $J_j$. We have the following decisions:

\begin{itemize}
	\item We select job $J_j$. Then we iterate over all available locations in the first available block from the parent node: let us denote by $(i,k)$ such location. Then, we assign job $J_j$ on the machine $M_i$ at the position $k$. Thus, we create as many children nodes as available locations in this block.
	\item We do not select job $J_j$, so we remove it in the corresponding child node.
\end{itemize}

Consequently, if $N_a$ is the number of available locations in the first available block, we create exactly $(N_a+1)$ children nodes.

The nice aspect of this branching scheme is that we know the completion time of each machine. If we have $k$ equal-size jobs of which we want to select and schedule $\ell$ $(\ell=1, \ldots ,k)$ jobs, then we can use the algorithm that we used to prove Theorem~\ref{thm:Solve-Q|pj=p|Lex(Sum-Cj,Sum-wj)} to find these $\ell$ jobs. We create a node for each $\ell$ value and for each machine's selection on the last block, resulting in $\mathcal{O}(2^m n)$ nodes.

\subsection{Memorization}

\cite{shangBranchMemorizeExact2021} introduce the Branch-and-memorize paradigm, describing how to apply memorization within search tree algorithms for sequencing problems. The authors present several memorization schemes that can be used to prune the search tree and avoid solving identical or dominated subproblems.

The first scheme is solution memorization, which involves storing in memory the best solution reachable from each node. This approach can be used when the problem is decomposable, meaning that an optimal solution of a subproblem does not depend on the decisions taken to generate the subproblem. Before branching on a node, we check if there exists an optimal solution of the subproblem that has already been explored in memory. If so, we do not need to branch on this node.

The second scheme is called passive node memorization and involves storing information about the explored nodes in memory. This scheme requires a function $check(s,s')$ that returns \textit{true} if partial solution $s$ dominates partial solution $s'$. The idea is to check before branching on a node whether there exists a node in memory that dominates it.

The third and final scheme is predictive node memorization. Like passive node memorization, it shares the same idea: storing information about explored nodes in memory. Before branching on a node, we check whether there exists a node in memory that dominates it. If no such node exists, we try to create a partial solution that has not been explored in the tree and dominates the current branching node. If such a better partial solution is found, we store it in memory, and we do not branch on the current node since it is dominated.

Our problem is non-decomposable because the decision made during the partial schedule has a significant impact on the optimal solution for the remaining jobs. Moreover, each node $\mathcal{N}$ of the search tree represents a partial solution $s$, that is composed of a partial schedule $\sigma_s$, a set of remaining jobs $\Omega_s$, and a number of jobs $k_s^1$ (respectively $k_s^0$) that must be scheduled on high-speed (respectively low-speed) machines to complete the solution.

In this context, we employ passive node memorization based on the function $check(s,s')$. To that end, we associate a data structure with each node. This structure consists of: a bit set $\{0,1\}^{N}$ encoding the set of remaining jobs; an array with two parts representing the total completion time of high-speed (respectively low-speed) machines, both sorted in non-increasing order; the partial solution value $\sum\limits_{j \in \sigma_s} w_j U_j(\sigma_s)$; and integers for $k_s^1$ and $k_s^0$. Thus, given two nodes $\mathcal{N}_1$ and $\mathcal{N}_2$, their data structures enable implementation of the dominance condition from Theorem \ref{thm:dominanceRuleMemo} as a function $check(s_1,s_2)$.

We store the data structure of each explored node in a database $Db$. To accelerate the search for a dominating node, we group data structures in the database by the size $|\Omega_s|$. Thus, $Db[k]$ is the set of data structures associated with explored nodes for which $|\Omega_s|=k$. Consequently, given a node $\mathcal{N}_1$, we search for a dominating node $\mathcal{N}_2$ in $Db[k]$ (for $k=1,\dots,|\Omega_{s_1}|$).

However, the database can become full due to the exponential number of explored nodes. Therefore, we need to define a policy to clear it: at any time the database is full, we remove all nodes that have never dominated any other node.

As authored by \cite{shangBranchMemorizeExact2021}, the memorization mechanism has an impact on the choice of the most efficient search strategy. Experiments on that matter are reported in \href{sec:ExperRes}{Section 4}.

\subsection{Initial upper bound computation}

To compute a good upper bound at the root node of the search tree, we first compute a solution with the MIP by setting a time limit to 20 seconds (experimental results showed that 20 seconds is enough to often get near-optimal solutions).

We also use a heuristic that improves this solution. Let $s=(\sigma_s,\Omega_s)$ be a solution, with $\sigma_s$ the schedule, $\mathcal{I}_s$ the set of jobs that have been scheduled, and $\Omega_s$ the set of jobs that have not been selected. The main idea, is to loop over each block $\mathcal{B}_b$, for $b=1,\hdots,b_{\max}$. For a given block, we make all jobs in it available. Afterwards, we solve an assignment problem with all the jobs that can be scheduled in $\mathcal{B}_b$, i.e., jobs $J_j \in \mathcal{I}_s \cup \Omega_s$ with the processing time $p_j$ such as $\max\limits_{(i,k) \in \mathcal{B}_{b-1}}(p_{[i,k]}) \leqslant p_j \leqslant \min\limits_{(i,k) \in \mathcal{B}_{b+1}}(p_{[i,k]})$ where $p_{[i,k]}$ is the processing time of the job in position $k$ on machine $M_i$. Finally, we optimize all equal-size jobs using the same algorithm as described in the proof of Theorem~\ref{thm:Solve-Q|pj=p|Lex(Sum-Cj,Sum-wj)} in Section~\ref{subsubsec:FollowerPb}.

\subsection{Lower bound computation by Column Generation}

We propose an extension of the column generation approach presented in \cite{vandenakkerParallelMachineScheduling1999} in order to compute a lower bound at each node of the search tree.

The master problem is formulated as a set-covering problem with an exponential number of binary variables and a linear number of constraints. We use {\em machine schedules}, which are defined as a sequence of jobs that can be assigned to any machine. In the master problem, we cannot enforce the block structure, because we do not know the selected jobs. We do know, however, that the machine schedules must be part of a potential schedule constructed by the follower that follows the block structure. Hence, we can impose several constraints on the machine schedules that we consider. First of all, we know that the jobs must be in SPT order. Second, since the block structure is independent of the selection of the jobs, we know that we must fill the blocks $\mathcal{B}_b$, for $b=1, \ldots, b_{\max}$, which provides information on the number of jobs per machine. Since the blocks $\mathcal{B}_b$ are full for $b=2, \ldots, b_{\max}$, we know the resulting number of jobs for each fast and slow machine. The only freedom with respect to the number of jobs per machine schedule that we have concerns the $Q$ jobs in the first block, $\mathcal{B}_1$. If this block is not filled completely, then we must distribute these $Q$ jobs over the relevant machines. This leads to machine schedules with a minimum or maximum number of jobs, which corresponds to a difference of one job. Finally, we can limit the maximum number of jobs with equal processing time that can occur in a machine schedule. We will work this out later.

Each machine schedule $s$ is characterized by the following set of parameters:

\begin{equation}\nonumber
	a_{j,s} = \begin{cases}1 \text{ if the job $J_j$ is in the schedule $s$} \\ 0 \text{ otherwise}\end{cases}
\end{equation}

\begin{equation}\nonumber
	b_{s{\gamma}+} = \begin{cases}1 \text{ if the machine schedule $s$ is to be scheduled on a machine with speed $V_{\gamma}$} \\ \hspace{5mm} \text{and contains the maximum number of jobs} \\ 0 \text{ otherwise}\end{cases}
\end{equation}

\begin{equation}\nonumber
	b_{s{\gamma}-} = \begin{cases}1 \text{ if the machine schedule $s$ is to be scheduled on a machine with speed $V_{\gamma}$} \\ \hspace{5mm} \text{and contains the minimum number of jobs} \\ 0 \text{ otherwise}\end{cases}
\end{equation}

\begin{equation}\nonumber
	t_{j,s} = \begin{cases}1 \text{ if the job $J_j$ is late in the schedule $s$ } \\ 0 \text{ otherwise}\end{cases}
\end{equation}

These constants can be computed for any given schedule $s$.

In the column generation approach, we aim to compute a lower bound for $\sum_j w_jU_j$. Notice that we cannot compute it as in the bilevel scenario, because we cannot impose a constraint that ensures that, when $n$ jobs are selected, they must be scheduled to minimize $Lex(\sum_j C_j, \sum_j w_jU_j)$: this is a consequence of the block structure that cannot be enforced before we know the $n$ selected jobs. As a result, we focus on minimizing the single-level $\sum_j w_jU_j$ problem in a scenario where we select $n$ jobs. So, the optimal solution of our restricted master problem is a lower bound to the optimal solution of the master problem that is itself a lower bound to the optimal solution of the bilevel problem. For a schedule $s$, we associate the cost $c_{s}$ defined as follows: $c_{s} = \sum_{j=1}^N w_j t_{j,s}$.

Let $S$ be the set containing all feasible machine schedules. We introduce variables $x_{s} \in \left\{0,1\right\}$, with $s \in S$, and $x_{s} = \begin{cases}1 \text{ if the machine schedule $s$ is selected } \\ 0 \text{ otherwise}\end{cases}$

Now, our master problem consists of selecting $m$ machine schedules that contain exactly $n$ distinct jobs corresponding to the leader's decision, and must minimize the total cost. These $m$ machine schedules are partitioned in $m_0$ and $m_1$ machine schedules for the machines with speed $V_0$ and $V_1$, respectively. Furthermore, these $m_0$ machine schedules are partitioned into $m_{0+}$ and $m_{0-}$ machine schedules with a maximum and minimum number of jobs; a similar partition holds for the $m_1$ machine schedules. If $\mathcal{B}_1$ corresponds to positions on machines with speed $V_0$ (or $V_1$) only, then we can compute the corresponding values of $m_{0+}, m_{0-}, m_{1+}, m_{1-}$, and we find the following master problem formulation:

% Master PB

{\setstretch{0}
\begin{flalign}
	 & \min \; \sum_{s \in S} c_{s} \, x_{s}                     &\label{eq:objMaster}                                         \\
	 & \textbf{s.t.}                                             &\nonumber                                                    \\
	 & \sum_{s \in S} a_{j,s} \, x_{s} \leqslant 1 \hspace{5mm}   \forall j = 1, \hdots, N \label{eq:oneJobSelectedAtMostOne}  \\
	 & \sum_{s \in S} b_{s0+} x_s = m_{0+}                       &\label{eq:selectFastMaxMachines}                             \\
	 & \sum_{s \in S} b_{s0-} x_s = m_{0-}                       &\label{eq:selectFastMinMachines}                             \\
	 & \sum_{s \in S} b_{s1+} x_s = m_{1+}                       &\label{eq:selectSlowMaxMachines}                             \\
	 & \sum_{s \in S} b_{s1-} x_s = m_{1-}                       &\label{eq:selectSlowMinMachines}                             \\
	 & x_s \in \{0,1\} \hspace{5mm}                              \forall s \in S \nonumber                     
\end{flalign}
}
In this formulation Constraints (\ref{eq:oneJobSelectedAtMostOne}) ensure that each job is processed at most once. Constraints (\ref{eq:selectFastMaxMachines}) and (\ref{eq:selectFastMinMachines}) ensure that we select the correct number of machine schedules for fast machines with the maximum and minimum number of scheduled jobs. Constraints (\ref{eq:selectSlowMaxMachines}) and (\ref{eq:selectSlowMinMachines}) ensure the same for the slow machines. Constraints (\ref{eq:selectFastMaxMachines}),(\ref{eq:selectFastMinMachines}),(\ref{eq:selectSlowMaxMachines}) and (\ref{eq:selectSlowMinMachines}) ensure that we select exactly $n$ jobs, reflecting the leader's decisions.

If $\mathcal{B}_1$ is processed on both high and low speed machines, then we don't know the exact values $m_{0+}, m_{0-}, m_{1+}, m_{1-}$. Nevertheless, we know that there are $Q$ jobs to be scheduled in $\mathcal{B}_1$. Since there are $m_0$ fast machines, we know that we need a total of $m_0$ machine schedules for fast machines. Similarly, we need $m_1$ machine schedules for slow machines. Furthermore, we need a total of exactly $Q$ machines schedules with a maximum number of jobs.

Given these considerations, we can rewrite the last four constraints as follows:
\begin{itemize}
	\item Constraints (\ref{eq:selectFastMaxMachines}) and (\ref{eq:selectFastMinMachines}) become : $\sum_{s \in S} (b_{s0+} + b_{s0-}) x_s = m_{0}$. This constraint ensures that the total number of machine schedules for fast machines with minimal and maximal number of scheduled jobs selected is equal to the number of high speed machines.
	\item Constraints (\ref{eq:selectSlowMaxMachines}) and (\ref{eq:selectSlowMinMachines}) become : $\sum_{s \in S} (b_{s1+} + b_{s1-}) x_s = m_{1}$. This constraint ensures that the total number of machine schedules for slow machines with minimal and maximal number of scheduled jobs selected is equal to the number of low speed machines.
	\item We add a constraint $\sum_{s \in S} (b_{s0+}+b_{s1+}) x_s = Q$ that ensures that we select exactly the number of machines with the maximum number of scheduled jobs to be equal to the number of jobs in the first block.
\end{itemize}

We obtain the linear programming relaxation by replacing the definition set of variables $x_{s} \in \{0,1\}$ by $x_{s} \in [0,1]$.

\paragraph{Generating Columns} In the analysis below we refer to the model with constraints (\ref{eq:selectFastMaxMachines}),(\ref{eq:selectFastMinMachines}), (\ref{eq:selectSlowMaxMachines}) and (\ref{eq:selectSlowMinMachines}). The analysis can easily be adjusted to deal with the second model. We compute the reduced cost $r_{s}$ of any machine schedule $s$ as follows:

\begin{equation}
	\begin{aligned}
		r_{s} & = c_{s^{\gamma}} - \sum_{j=1}^N \lambda_j \, a_{j,s^{\gamma}} - \tilde{\lambda}_{\gamma}
	\end{aligned}
\end{equation}

where $\tilde{\lambda}_{\gamma}=\begin{cases}
		\lambda_{0+} \text{ if $s$ is a fast machine with the maximum number of jobs} \\
		\lambda_{0-} \text{ if $s$ is a fast machine with the minimum number of jobs} \\
		\lambda_{1+} \text{ if $s$ is a slow machine with the maximum number of jobs} \\
		\lambda_{1-} \text{ if $s$ is a slow machine with the minimum number of jobs}
	\end{cases}$
\\ and respectively $\lambda_{0+}$,$\lambda_{0-}$,$\lambda_{1+}$ and $\lambda_{1-}$ represent the given values of the dual variables corresponding to Constraints (\ref{eq:selectFastMaxMachines}),(\ref{eq:selectFastMinMachines}), (\ref{eq:selectSlowMaxMachines}) and (\ref{eq:selectSlowMinMachines}). Notice that $\lambda_1, \hdots, \lambda_N$ constitute the given values of the dual variables corresponding to constraints (\ref{eq:oneJobSelectedAtMostOne}).

It is well-known from the linear programming theory that a solution to a minimization problem is optimal if the reduced cost of each non basic variable is nonnegative. Therefore, we need to determine if there exists a machine schedule $s$ with such a negative cost. We solve the \textit{pricing problem}, that involves finding a machine schedule in $S$ with the minimum reduced cost. Removing the constant, we need to minimize:

\begin{equation} \nonumber
	\begin{aligned}
		r_{s} & =
		c_{s} - \sum_{j=1}^N \lambda_{j} \, a_{j,s} = \sum_{j=1}^{N} \left(w_j t_{j,s}  - \lambda_{j} \, a_{j,s}\right) \\
	\end{aligned}
\end{equation}

We achieve the calculation of the schedule with a minimal reduced cost by dynamic programming. It is applied for each speed $V_0$ and $V_1$ and for the maximum and minimum number of scheduled jobs.

Define the job groups $G_1, \dots, G_c$ such that jobs within each group have equal processing time. We assume that $p(G_1) < p(G_2) < \dots < p(G_c)$, where $p(G_g)$ denotes the processing time of any job in $G_g$. Thus, the set of jobs $\mathcal{J}$ is partitioned as $\mathcal{J} = G_1 \cup \dots \cup G_c$. Assume that we solve the dynamic programming for speed $V_\gamma$. Define $F^\gamma_k(t, g)$ as the minimum reduced cost of a machine schedule with speed $V_\gamma$ such that the first job from sets $G_g, \dots, G_c$ starts at time $t$ and where exactly $k$ jobs have been selected from the job sets $G_g, \dots, G_c$. The initialization is $F^\gamma_k(t, g) = 0$ for $k=0$, and is $+\infty$ otherwise; this implies that we must start with an empty schedule. We find the best machine schedule for a minimum and maximum number of scheduled jobs in one run of the dynamic programming, adding the multiplier $\tilde{\lambda}_\gamma$ for the corresponding constraint at the end. Given the relevant values of $F^\gamma_k(t, g)$, we compute the values of $F^\gamma_k(t, g-1)$ by taking the jobs from $G_{g-1}$ into account. If we select $\ell$ jobs from set $G_{g-1}$, then the first of these jobs will start at time $t$ and the $\ell$-th job will end at time $t + \ell p/V_\gamma$, where $p$ is the processing time of all jobs in set $G_{g-1}$. We can use a subroutine to compute the cost of completing these $\ell$ jobs, which is a polynomial-time algorithm and is described at the end of the section. We denote $S(\ell, g-1, t)$ as the cost for completing $\ell$ jobs from group $G_{g-1}$ that start at time $t$.

The advantage of recording the number of jobs already scheduled is two-fold. First, we only select machine schedules with the right number of jobs. Secondly, we can reduce the number of jobs that we select from a group $G_g$ by setting $\ell_{\max}$ as the maximum number of jobs allowed from that group. Suppose that we want to determine $F^\gamma_k(t, g)$, including the jobs from $G_g$; we need to schedule $k$ jobs on the machine with speed $V_\gamma$. Thus, using the block structure, we can identify which block $\mathcal{B}_h$ contains the $k^{\text{th}}$ job from the end on the machine. If the leader selects all jobs from $G_g$ with at least one job in block $\mathcal{B}_h$, then this will fill subsequent blocks, and we denote the last filled block as $\mathcal{B}_u$. Thus, $\ell_{\max}$ is the maximum number of jobs that can be assigned to one machine with speed $V_\gamma$ from the relevant blocks $\mathcal{B}_h$ up to and including $\mathcal{B}_u$. These computations are performed as preprocessing -- prior to solving any pricing problem -- since the block structure remains unchanged throughout the column generation process.

The recurrence relation then becomes equal to:

\[
	F^\gamma_k(t, g - 1) = \min\limits_{0\leqslant \ell \leqslant \ell_{\max}}\left\{F^\gamma_{k-\ell}(t+\ell p/V_{\gamma}, g)+S(\ell,g-1,t)\right\}
\]

In this formula $\ell$ corresponds to the number of jobs from $G_{g-1}$ that we add; here $\ell=0$ implies that no jobs from $G_{g-1}$ are added.

The optimal value $\stackrel{\star}{F^{\gamma}}$ for the machine with the speed $V_{\gamma}$ is obtained by computing $F^{\gamma}_{\eta_{\gamma+}}(0,1)$, where $\eta_{\gamma+}$ is the maximum number of jobs that can be scheduled on machine with speed $V_{\gamma}$.

% Sub problem : identical jobs

Now, let us return to the subroutine $S(\ell, g, t)$. First, if $G_g$ contains only one job denoted $J_g$, then the cost of completing this single job is given by $S(\ell, g, t)=\begin{cases}
		- \lambda_g     & \text{ if $t+p_g/V_{\gamma} \leqslant d_g$} \\
		w_g - \lambda_g & \text{ if $t+p_g/V_{\gamma} > d_g$}
	\end{cases}$

Secondly, we consider a set of equal-size jobs that start at time $t$ with $n_g$ jobs $J_1, \hdots, J_{n_g}$ in $G_g$, where $p_1=p_2=\hdots=p_{n_g}=p$. We aim to minimize the reduced cost by selecting and scheduling some of these jobs. We consider all the possible scenarios where $1$ job is selected, $2$  jobs are selected, $\hdots$, $\ell_{\max}$ jobs are selected. We solve the equivalent maximization reduced cost problem, where each coefficient is multiplied by -1. Thus, each job has a contribution defined by $(\lambda_j - w_j \, U_j$), with $U_j$ equals 1 if the job is late 0 otherwise.

Let $v_j=\lambda_j$ be the contribution of job $J_j$ when it is executed on time, and let $q_j = \lambda_j - w_j$ be its contribution of $J_j$ when it executed tardy. There are three possibilities for job $J_j$:
either it is selected as an on-time job, or as a tardy job, or not at all. A naive approach to solving this problem involves solving $\ell_{\max}$ assignment problems, one for each scenario where we select $\ell=1,\hdots,\ell_{\max}$ jobs. The total time complexity of this naive algorithm that computes all possible schedules of negative reduced cost is $\mathcal{O}({n_g}^4)$. We propose below a dedicated algorithm with a total time complexity of $\mathcal{O}({n_g}^3)$.

Since we do not know the number of jobs that need to be selected beforehand, this may force us to select a job even if its contribution is negative when tardy. To do so, we subtract some value $M$ from each job's contribution. This step aims to reduce to two the number of choices for each job. If the job is on-time or tardy but its contribution is positive, i.e. $v_j - M\geqslant0$ or $q_j-M\geqslant0$, then we select it. Otherwise, the job's contribution is negative, and we do not select it at all.  We can adjust the number of jobs that are selected by modifying the value of $M$.

Assume that $M$ is given, and define, for each job $J_j$, the value $w'_j$ as the difference between being on time with adjusted value $(v_j - M)$ and the second option where either the job is executed tardy with the adjusted value $(q_j - M)$ or it is not selected. Thus, the value $w'_j$ becomes $w'_j = (v_j - M) - \max(0, q_j - M) = \min(v_j - M, v_j - q_j)$.
The problem now essentially becomes a $1||\sum w'_jU_j$ problem, which can be solved using Dynamic Programming which assumes that the jobs are in EDD order. Let $f_j (k)$ be the total cost of the on-time jobs from $\{J_1, \ldots ,J_j\}$, where we have exactly $k$ jobs on time. The recurrence relation is
\[
	f_{j+1}(k)= \max \{f_j(k), f_j(k-1)+w'_{j+1} \} \quad \quad \mbox{if $d_{j+1} \geq t + kp$},
\]
and $f_j(k)$, otherwise. The running time of this algorithm is $\mathcal{O}({n_g}^2)$, since $k$ and $j$ run from 0 to ${n_g}$. We can reduce the running time of the above dynamic programming algorithm to
$\mathcal{O}(n_g)$ by renumbering the jobs in order of non-increasing $w'_j$ value and using a clever data structure; we describe this algorithm in \ref{sec:PricingEqualSizeJobJippeAlgo}.

We focus on choosing the value of $M$. First, we select a small perturbation $\epsilon > 0$; since scheduling or removing a job is allowed by the differences $v_j - M$ and $q_j - M$, we have at least four possible values of $M$: $M = v_j - \epsilon$, $M = v_j + \epsilon$, $M = q_j - \epsilon$, and $M = q_j + \epsilon$. Furthermore, the value of $M$ impacts $w'_j = \min(v_j - M, v_j - q_j)$; for all $M < q_j$, $w'_j$ is constant and equal to $v_j - q_j$, while it becomes linear for $M \geqslant q_j$. Since $w'_j$ determines the order in which jobs are carried out, we need to add values of $M$ where the linear part of $w'_j$ intersects the constant part of $w'_k$ for all distinct jobs $j,k=1,\hdots,n_g$, specifically at $M = v_j - v_k + q_k + \epsilon$ and $M = v_j - v_k + q_k - \epsilon$. If we must select all jobs, the value of $M$ is set to $\min\limits_{1 \leqslant j \leqslant n_g} \{q_j\} - \epsilon$.

If there are equal contributions for on-time jobs, we sort them in non-increasing order of weight $w_j$, since we prefer to select the job with the greatest weight first. Conversely, if there are equal contributions for tardy jobs, we sort them in non-decreasing order of weight $w_j$, since we prefer to select the job with the smallest weight first. After sorting, we apply a slight perturbation to the contributions to ensure that all $M$ values become distinct, leading to the computation of $\mathcal{O}(n_g^2)$ distinct values of $M$.

Finally, as the value of $M$ decreases, the number of selected jobs increases. Therefore, we sort the $M$ values in non-increasing order. If, for a given value of $M$, the number of selected jobs exceeds $\ell_{\max}$, then we can terminate the subroutine.

So, the total complexity of the algorithm that computes all possible schedules of negative reduced cost is $\mathcal{O}({n_g}^3)$ time.

Notably, in many cases, $\ell_{\max}$ is small, typically less than or equal to three. In these situations, we employ a brute-force approach that exhaustively tests all possible combinations of job selection. For the cases where $\ell_{\max} \geqslant 4$ jobs, we use the algorithm described above. For implementation details on how to optimize the column generation approach we refer to \ref{sec:OPTCGwithBaB}. Also, we attempted to use heuristics to accelerate the generation of improved columns; unfortunately, experimental results indicate that this approach does not enhance the method. Conversely, generating multiple columns during the backtracking phase in dynamic programming has shown to improve the method, as evidenced by our experiments where generating three columns during this phase yields best performances.

To resume, the pricing problem is solved by a dynamic programming that requires 

$\mathcal{O} \left( \sum_{j=1}^{N} p_j \; \times \; N \; \times \; n^3 \right)$ time and space in the worst case.

\section{Experimental results}
\label{sec:ExperRes}

In this section we computationally compared CPLEX solver version 22.1.1. applied to the MIP formulation (denoted as MIP) and the branch-and-bound algorithm (denoted as BaB). All algorithms were coded in langague C++, and the tests were run on a PC 2GHz AMD Epyc 7702. The source code can be found on the GitHub repository at \\ \href{https://github.com/QuentinSchau/Exact\_Algorithms\_Bilevel\_Scheduling\_Industry\_Futur}{https://github.com/QuentinSchau/Exact\_Algorithms\_Bilevel\_Scheduling\_Industry\_Futur}. We randomly generated instances where we have $m_1$ high-speed machines with their speed $V_1=2$ and $m_0$ low-speed machines with their speed $V_0=1$. The processing times were drawn at random from the uniform distribution $U[1,100]$ and weights from the uniform distribution $U[1,10]$. Due dates were generated using the uniform distribution $U[P \times (1-tf-rdd/2),P \times (1-tf+rdd/2)]$ with $tf,rdd \in \{0.2,0.4,0.6,0.8,1.0\}$ and $P=\frac{\sum_{j=1}^N p_j}{m_1 \times V_1 + m_0 \times V_0}$. Here, $tf$ and $rdd$ define a class of instances and we have 25 classes of instances. For each class, we randomly generated 10 instances of size $N=40,50,60,70,80$. For each value of $N$ we set $n=N/4,N/2,3N/4$, where values were rounded down in case they were fractional. Moreover, we set the total number of machines to $m=2,4$, where we have the same number of high-speed and low-speed machines.

Primary experimental results indicated us that using memorization yields the best results for the BaB algorithm when a depth-first search strategy is used. In the remainder, we consider this version of BaB.

We divide these instances into "easy" instances for which, all exact methods solve them to optimality in at most 300 seconds and "hard" instances for which, at least one method does not solve them to optimality within this time budget.

Table \ref{tab:TimeAllOpt} presents the results on easy instances. In this table, we report the number of instances solved to optimality (\#OPT) out of the 250 per class by both algorithms. For each algorithm, we provide the minimum, average, and maximum computation times, in seconds, for each value of $N$ and $n$.

Table \ref{tab:TimeOther} presents the results for "hard" instances. There, \#OPT shows the number of instances solved to optimality by each method, as well as the minimum, average, and maximum computation times.

Tables \ref{tab:NbNodesAllMethod} and \ref{tab:NbNodes} present the minimum, average, and maximum number of nodes explored by both methods for easy and "hard" instances, respectively.

Tables \ref{tab:TimeAllOpt} and \ref{tab:TimeOther} demonstrate that BaB enables solving more instances to optimality with a significant improvement in computation time compared to MIP. MIP can solve to optimality all instances with $n=N/2$ and $N=40$ on two and four machines, whereas BaB can solve all instances for $n=N/2$ until $N=60$ for two machines and until $N=50$ for four machines. Additionally, for $n=3N/4$ and for two machines, MIP does not solve to optimality all instances for $N=40$, whereas BaB can solve these instances for $N=50$. Furthermore, although MIP can solve more instances for $n=3N/4$ and $N=80$ for four machines on hard instances, on average on easy instances, BaB is faster. Nevertheless, in terms of computation time, BaB is generally faster than MIP.

Moreover, Tables \ref{tab:NbNodesAllMethod} and \ref{tab:NbNodes} demonstrate that for instances solved optimally with $n=N/4$ and $N=40,50,60$ on two machines and for $N=40,50,60,70$ on four machines, the number of nodes explored by BaB is 0. Since we run MIP for several seconds to compute an upper bound, it follows that MIP has identified and validated the optimal solution.
For hard instances, BaB explores significantly fewer nodes (at least a factor of 2 reduction on average until $N=80$ and $n=N/2$) compared to MIP for two machines. However, for instances with four machines, MIP appears to explore fewer nodes. This disparity can be attributed to the branching schemes used by each algorithm, specifically $\mathcal{O}((m+1)^N)$ for BaB . Although this scheme remains competitive with MIP for two machines, its exponential growth with respect to the number of machines has a negative impact on BaB's performance due to the increased number of lower bound computations, which are relatively time-consuming.

Since the leader must select $n$ out of $N$ jobs, leading to ${N}\choose{n}$ possibilities, the combinatorics of selection is greatest when $n=N/2$. Furthermore, we observe that this value does not necessarily lead to the most challenging scenario for the bilevel problem, whereas it does for the leader's problem. This difference can be attributed to the block structure of the follower's problem, as having more jobs allows for more permutations within this block compared to the selection options of the leader.

\begin{landscape}
	\setlength{\tabcolsep}{3pt}
	\begin{table}[ht!]
		\begin{minipage}{.5\linewidth}
			\centering
			\begin{tabular}{ccllllllll}
				\toprule
				                      &                       &     &         & \multicolumn{3}{c}{MIP} & \multicolumn{3}{c}{BaB}                                                                   \\
				$m$                   & $N$                   & $n$ & \#OPT   & $t_{min}$               & $t_{avg}$               & $t_{max}$       & $t_{min}$ & $t_{avg}$       & $t_{max}$       \\
				\midrule
				\multirow[]{15}{*}{2} & \multirow[]{3}{*}{40} & 10  & 250/250 & 0.05                    & 0.08                    & 0.33            & 0.05      & 0.09            & 0.32            \\
				\cline{3-10}
				                      &                       & 20  & 250/250 & 0.16                    & 4.41                    & 238.83          & 0.16      & \textbf{1.63  } & \textbf{23.17 } \\
				\cline{3-10}
				                      &                       & 30  & 222/250 & 0.38                    & 17.08                   & 281.96          & 0.38      & \textbf{8.55  } & \textbf{45.97 } \\
				\cline{2-10} \cline{3-10}
				                      & \multirow[]{3}{*}{50} & 12  & 250/250 & 0.08                    & 0.14                    & 3.18            & 0.08      & 0.15            & 3.13            \\
				\cline{3-10}
				                      &                       & 25  & 240/250 & 0.30                    & 10.02                   & 289.95          & 0.29      & \textbf{3.91  } & \textbf{27.90 } \\
				\cline{3-10}
				                      &                       & 37  & 157/250 & 0.76                    & 48.83                   & 271.72          & 0.76      & \textbf{28.60 } & \textbf{247.08} \\
				\cline{2-10} \cline{3-10}
				                      & \multirow[]{3}{*}{60} & 15  & 250/250 & 0.15                    & 0.30                    & 3.23            & 0.15      & 0.30            & 3.23            \\
				\cline{3-10}
				                      &                       & 30  & 219/250 & 0.49                    & 15.57                   & 208.50          & 0.51      & \textbf{7.90  } & \textbf{56.44 } \\
				\cline{3-10}
				                      &                       & 45  & 75/250  & 1.26                    & 37.04                   & 279.53          & 1.27      & \textbf{27.22 } & \textbf{234.12} \\
				\cline{2-10} \cline{3-10}
				                      & \multirow[]{3}{*}{70} & 17  & 250/250 & 0.20                    & 0.72                    & 43.22           & 0.20      & \textbf{0.63  } & \textbf{20.46 } \\
				\cline{3-10}
				                      &                       & 35  & 166/250 & 0.76                    & 26.36                   & 284.26          & 0.78      & \textbf{14.30 } & \textbf{250.45} \\
				\cline{3-10}
				                      &                       & 52  & 55/250  & 1.82                    & 25.96                   & \textbf{225.60} & 1.84      & \textbf{21.44 } & 278.99          \\
				\cline{2-10} \cline{3-10}
				                      & \multirow[]{3}{*}{80} & 20  & 249/250 & 0.25                    & 4.05                    & 280.34          & 0.24      & \textbf{1.81  } & \textbf{24.32 } \\
				\cline{3-10}
				                      &                       & 40  & 136/250 & 1.05                    & 15.06                   & 235.28          & 1.08      & \textbf{9.08  } & \textbf{172.46} \\
				\cline{3-10}
				                      &                       & 60  & 44/250  & 2.46                    & 12.08                   & 256.70          & 2.47      & \textbf{8.23  } & \textbf{96.26 } \\
				\cline{2-10} \cline{3-10} \bottomrule
				\multirow[]{15}{*}{4} & \multirow[]{3}{*}{40} & 10  & 250/250 & 0.06                    & 0.10                    & 0.20            & 0.06      & 0.10            & 0.22            \\
				\cline{3-10}
				                      &                       & 20  & 250/250 & 0.17                    & 2.63                    & 69.37           & 0.17      & \textbf{2.36  } & \textbf{45.78 } \\
				\cline{3-10}
				                      &                       & 30  & 132/250 & 0.41                    & 40.36                   & 292.22          & 0.41      & \textbf{33.97 } & \textbf{285.52} \\
				\cline{2-10} \cline{3-10}
				                      & \multirow[]{3}{*}{50} & 12  & 250/250 & 0.08                    & 0.14                    & 0.44            & 0.08      & 0.14            & 0.43            \\
				\cline{3-10}
				                      &                       & 25  & 232/250 & 0.35                    & 29.90                   & 294.48          & 0.35      & \textbf{15.98 } & \textbf{293.98} \\
				\cline{3-10}
				                      &                       & 37  & 78/250  & 0.74                    & 23.06                   & 286.44          & 0.73      & \textbf{13.44 } & \textbf{171.93} \\
				\cline{2-10} \cline{3-10}
				                      & \multirow[]{3}{*}{60} & 15  & 250/250 & 0.19                    & 0.39                    & 4.18            & 0.20      & 0.39            & 4.17            \\
				\cline{3-10}
				                      &                       & 30  & 169/250 & 0.46                    & 20.27                   & 264.98          & 0.47      & \textbf{10.48 } & \textbf{157.46} \\
				\cline{3-10}
				                      &                       & 45  & 66/250  & 1.22                    & 10.87                   & 155.13          & 1.28      & \textbf{5.05  } & \textbf{30.88 } \\
				\cline{2-10} \cline{3-10}
				                      & \multirow[]{3}{*}{70} & 17  & 250/250 & 0.15                    & 0.84                    & 17.11           & 0.15      & 0.85            & 17.07           \\
				\cline{3-10}
				                      &                       & 35  & 144/250 & 0.67                    & 9.37                    & 292.39          & 0.67      & \textbf{8.02  } & \textbf{285.59} \\
				\cline{3-10}
				                      &                       & 52  & 55/250  & 1.99                    & 7.38                    & \textbf{137.67} & 1.99      & \textbf{6.31  } & 140.04          \\
				\cline{2-10} \cline{3-10}
				                      & \multirow[]{3}{*}{80} & 20  & 250/250 & 0.24                    & 1.70                    & 45.36           & 0.24      & \textbf{1.52  } & \textbf{23.20 } \\
				\cline{3-10}
				                      &                       & 40  & 132/250 & 1.01                    & \textbf{5.43  }         & \textbf{106.35} & 1.00      & 7.28            & 274.19          \\
				\cline{3-10}
				                      &                       & 60  & 53/250  & 2.71                    & 9.39                    & 153.82          & 2.71      & \textbf{5.28  } & \textbf{20.43 } \\
				\cline{2-10} \cline{3-10}
				\bottomrule
			\end{tabular}
			\caption{Computation times on easy instances}
			\label{tab:TimeAllOpt}
		\end{minipage}%
		\begin{minipage}{.5\linewidth}
			\centering
			\begin{tabular}{cclllllllll}
				\toprule
				                      &                       &     & \multicolumn{4}{c}{MIP} & \multicolumn{4}{c}{BaB}                                                                                \\
				$m$                   & $N$                   & $n$ & \#OPT                   & $t_{min}$               & $t_{avg}$ & $t_{max}$ & \#OPT            & $t_{min}$ & $t_{avg}$ & $t_{max}$ \\
				\midrule
				\multirow[]{10}{*}{2} & 40                    & 30  & 0/28                    & 300.00                  & 300.00    & 300.01    & \textbf{28/28 }  & 21.01     & 28.69     & 39.76     \\
				\cline{2-11} \cline{3-11}
				                      & \multirow[]{2}{*}{50} & 25  & 0/10                    & 300.00                  & 300.00    & 300.01    & \textbf{10/10 }  & 20.76     & 22.85     & 25.01     \\
				\cline{3-11}
				                      &                       & 37  & 0/93                    & 300.00                  & 300.01    & 300.01    & \textbf{93/93 }  & 29.29     & 60.14     & 185.26    \\
				\cline{2-11} \cline{3-11}
				                      & \multirow[]{2}{*}{60} & 30  & 0/31                    & 300.00                  & 300.01    & 300.01    & \textbf{31/31 }  & 22.08     & 33.58     & 72.06     \\
				\cline{3-11}
				                      &                       & 45  & 1/175                   & 90.91                   & 298.81    & 300.01    & \textbf{155/175} & 28.31     & 153.69    & 300.58    \\
				\cline{2-11} \cline{3-11}
				                      & \multirow[]{2}{*}{70} & 35  & 0/84                    & 300.01                  & 300.01    & 300.01    & \textbf{83/84 }  & 35.30     & 77.21     & 300.04    \\
				\cline{3-11}
				                      &                       & 52  & 11/195                  & 58.21                   & 290.34    & 300.02    & \textbf{59/195}  & 73.46     & 268.91    & 301.30    \\
				\cline{2-11} \cline{3-11}
				                      & \multirow[]{3}{*}{80} & 20  & 0/1                     & 300.01                  & 300.01    & 300.01    & \textbf{1/1   }  & 21.31     & 21.31     & 21.31     \\
				\cline{3-11}
				                      &                       & 40  & 1/114                   & 261.60                  & 299.67    & 300.02    & \textbf{107/114} & 33.22     & 117.99    & 302.06    \\
				\cline{3-11}
				                      &                       & 60  & 9/206                   & 26.99                   & 294.52    & 300.02    & \textbf{16/206}  & 58.59     & 293.56    & 301.63    \\
				\cline{2-11} \cline{3-11}\bottomrule
				\multirow[]{9}{*}{4}  & 40                    & 30  & 19/118                  & 31.58                   & 274.93    & 300.01    & \textbf{48/118}  & 22.40     & 225.61    & 300.02    \\
				\cline{2-11} \cline{3-11}
				                      & \multirow[]{2}{*}{50} & 25  & 0/18                    & 300.00                  & 300.01    & 300.01    & \textbf{18/18 }  & 21.40     & 49.84     & 144.24    \\
				\cline{3-11}
				                      &                       & 37  & 8/172                   & 38.49                   & 294.23    & 300.01    & \textbf{18/172}  & 31.21     & 284.66    & 300.18    \\
				\cline{2-11} \cline{3-11}
				                      & \multirow[]{2}{*}{60} & 30  & 2/81                    & 122.81                  & 296.64    & 300.01    & \textbf{61/81 }  & 21.15     & 149.51    & 300.04    \\
				\cline{3-11}
				                      &                       & 45  & 6/184                   & 34.65                   & 293.26    & 300.02    & \textbf{8/184 }  & 31.36     & 291.16    & 300.29    \\
				\cline{2-11} \cline{3-11}
				                      & \multirow[]{2}{*}{70} & 35  & 3/106                   & 52.62                   & 294.08    & 300.02    & \textbf{39/106}  & 24.85     & 240.49    & 300.48    \\
				\cline{3-11}
				                      &                       & 52  & 4/195                   & 62.19                   & 296.77    & 300.02    & \textbf{7/195 }  & 118.10    & 295.80    & 300.86    \\
				\cline{2-11} \cline{3-11}
				                      & \multirow[]{2}{*}{80} & 40  & 6/118                   & 18.87                   & 288.99    & 300.02    & \textbf{15/118}  & 36.42     & 276.95    & 300.49    \\
				\cline{3-11}
				                      &                       & 60  & \textbf{11/197}         & 30.08                   & 290.52    & 300.02    & 0/197            & 300.00    & 300.03    & 301.21    \\
				\cline{2-11} \cline{3-11}
				\bottomrule
			\end{tabular}
			\caption{Computation times on hard instances}
			\label{tab:TimeOther}
		\end{minipage}%
	\end{table}

	\begin{table}[ht!]
		\setlength{\tabcolsep}{2pt}
		\begin{minipage}{.5\linewidth}
			\centering
			\begin{tabular}{cclllllll}
				\toprule
				                      &                       &     & \multicolumn{3}{c}{MIP} & \multicolumn{3}{c}{BaB}                                                                                                                                                     \\
				$m$                   & $N$                   & $n$ & $\mathcal{N}_{min}$     & $\mathcal{N}_{avg}$                 & $\mathcal{N}_{max}$                 & $\mathcal{N}_{min}$ & $\mathcal{N}_{avg}$                 & $\mathcal{N}_{max}$                 \\
				\midrule
				\multirow[]{15}{*}{2} & \multirow[]{3}{*}{40} & 10  & 0                       & 11.12                               & 1223                                & 0                   & \textbf{0        }\textbf{        } & \textbf{0        }\textbf{        } \\
				\cline{3-9}
				                      &                       & 20  & 0                       & 1750.36                             & 97543                               & 0                   & \textbf{14.42    }\textbf{        } & \textbf{1300     }\textbf{        } \\
				\cline{3-9}
				                      &                       & 30  & 0                       & 4743.21                             & 125273                              & 0                   & \textbf{280.86   }\textbf{        } & \textbf{5593     }\textbf{        } \\
				\cline{2-9} \cline{3-9}
				                      & \multirow[]{3}{*}{50} & 12  & 0                       & 2.19                                & 158                                 & 0                   & \textbf{0        }\textbf{        } & \textbf{0        }\textbf{        } \\
				\cline{3-9}
				                      &                       & 25  & 0                       & 2815.82                             & 59516                               & 0                   & \textbf{44.50    }\textbf{        } & \textbf{1510     }\textbf{        } \\
				\cline{3-9}
				                      &                       & 37  & 0                       & 9595.18                             & 94478                               & 0                   & \textbf{1356.18  }\textbf{        } & \textbf{17518    }\textbf{        } \\
				\cline{2-9} \cline{3-9}
				                      & \multirow[]{3}{*}{60} & 15  & 0                       & 73.30                               & 2637                                & 0                   & \textbf{0        }\textbf{        } & \textbf{0        }\textbf{        } \\
				\cline{3-9}
				                      &                       & 30  & 0                       & 3955.66                             & 62184                               & 0                   & \textbf{218.45   }\textbf{        } & \textbf{6404     }\textbf{        } \\
				\cline{3-9}
				                      &                       & 45  & 0                       & 3319.24                             & \textbf{20040    }\textbf{        } & 0                   & \textbf{909.21   }\textbf{        } & 20398                               \\
				\cline{2-9} \cline{3-9}
				                      & \multirow[]{3}{*}{70} & 17  & 0                       & 255.98                              & 23073                               & 0                   & \textbf{0.09     }\textbf{        } & \textbf{22       }\textbf{        } \\
				\cline{3-9}
				                      &                       & 35  & 0                       & 3949.63                             & 47013                               & 0                   & \textbf{556.61   }\textbf{        } & \textbf{31449    }\textbf{        } \\
				\cline{3-9}
				                      &                       & 52  & 0                       & 2019.62                             & 15805                               & 0                   & \textbf{792.40   }\textbf{        } & \textbf{12706    }\textbf{        } \\
				\cline{2-9} \cline{3-9}
				                      & \multirow[]{3}{*}{80} & 20  & 0                       & 1257.62                             & 87631                               & 0                   & \textbf{9.67     }\textbf{        } & \textbf{442      }\textbf{        } \\
				\cline{3-9}
				                      &                       & 40  & 0                       & 1696.01                             & 27584                               & 0                   & \textbf{337.74   }\textbf{        } & \textbf{15842    }\textbf{        } \\
				\cline{3-9}
				                      &                       & 60  & 0                       & 301.95                              & 6455                                & 0                   & \textbf{178.77   }\textbf{        } & \textbf{4303     }\textbf{        } \\
				\cline{2-9} \cline{3-9}\bottomrule
				\multirow[]{15}{*}{4} & \multirow[]{3}{*}{40} & 10  & 0                       & 11.53                               & 651                                 & 0                   & \textbf{0        }\textbf{        } & \textbf{0        }\textbf{        } \\
				\cline{3-9}
				                      &                       & 20  & 0                       & 1678.14                             & 29660                               & 0                   & \textbf{229.43   }\textbf{        } & \textbf{47437    }\textbf{        } \\
				\cline{3-9}
				                      &                       & 30  & 0                       & \textbf{12026.83 }\textbf{        } & \textbf{142538   }\textbf{        } & 0                   & 14963.45                            & 170468                              \\
				\cline{2-9} \cline{3-9}
				                      & \multirow[]{3}{*}{50} & 12  & 0                       & 25.62                               & 1939                                & 0                   & \textbf{0        }\textbf{        } & \textbf{0        }\textbf{        } \\
				\cline{3-9}
				                      &                       & 25  & 0                       & 10522.53                            & \textbf{115001   }\textbf{        } & 0                   & \textbf{7275.95  }\textbf{        } & 178482                              \\
				\cline{3-9}
				                      &                       & 37  & 0                       & 3196.86                             & 45725                               & 0                   & \textbf{1060.41  }\textbf{        } & \textbf{25233    }\textbf{        } \\
				\cline{2-9} \cline{3-9}
				                      & \multirow[]{3}{*}{60} & 15  & 0                       & 120.92                              & 3581                                & 0                   & \textbf{0        }\textbf{        } & \textbf{0        }\textbf{        } \\
				\cline{3-9}
				                      &                       & 30  & 0                       & 4412.47                             & \textbf{59521    }\textbf{        } & 0                   & \textbf{3662.15  }\textbf{        } & 104169                              \\
				\cline{3-9}
				                      &                       & 45  & 0                       & 910.39                              & 11873                               & 0                   & \textbf{30.05    }\textbf{        } & \textbf{770      }\textbf{        } \\
				\cline{2-9} \cline{3-9}
				                      & \multirow[]{3}{*}{70} & 17  & 0                       & 486.33                              & 15517                               & 0                   & \textbf{0        }\textbf{        } & \textbf{0        }\textbf{        } \\
				\cline{3-9}
				                      &                       & 35  & 0                       & \textbf{1193.62  }\textbf{        } & \textbf{53012    }\textbf{        } & 0                   & 2880.17                             & 201848                              \\
				\cline{3-9}
				                      &                       & 52  & 0                       & \textbf{270.25   }\textbf{        } & \textbf{5441     }\textbf{        } & 0                   & 2923.82                             & 160809                              \\
				\cline{2-9} \cline{3-9}
				                      & \multirow[]{3}{*}{80} & 20  & 0                       & 765.20                              & 29961                               & 0                   & \textbf{27.23    }\textbf{        } & \textbf{2811     }\textbf{        } \\
				\cline{3-9}
				                      &                       & 40  & 0                       & \textbf{550.84   }\textbf{        } & \textbf{12639    }\textbf{        } & 0                   & 3044.73                             & 221898                              \\
				\cline{3-9}
				                      &                       & 60  & 0                       & 260.09                              & 10272                               & 0                   & \textbf{0.04     }\textbf{        } & \textbf{1        }\textbf{        } \\
				\cline{2-9} \cline{3-9}
				\bottomrule
			\end{tabular}
			\caption{Number of explored nodes on easy instances}
			\label{tab:NbNodesAllMethod}
		\end{minipage}%
		\begin{minipage}{.5\linewidth}
			\centering
			\begin{tabular}{cclllllll}
				\toprule
				                      &                       &     & \multicolumn{3}{c}{MIP} & \multicolumn{3}{c}{BaB}                                                                                         \\
				$m$                   & $N$                   & $n$ & $\mathcal{N}_{min}$     & $\mathcal{N}_{avg}$     & $\mathcal{N}_{max}$ & $\mathcal{N}_{min}$ & $\mathcal{N}_{avg}$ & $\mathcal{N}_{max}$ \\
				\midrule
				\multirow[]{10}{*}{2} & 40                    & 30  & 13370                   & 32334.43                & 107382              & 79                  & \textbf{1566.07 }   & \textbf{5127    }   \\
				\cline{2-9} \cline{3-9}
				                      & \multirow[]{2}{*}{50} & 25  & 31952                   & 63335.10                & 120191              & 84                  & \textbf{498.80  }   & \textbf{1294    }   \\
				\cline{3-9}
				                      &                       & 37  & 3510                    & 28523.70                & 89151               & 192                 & \textbf{3338.08 }   & \textbf{18599   }   \\
				\cline{2-9} \cline{3-9}
				                      & \multirow[]{2}{*}{60} & 30  & 14447                   & 39547.10                & 151066              & 126                 & \textbf{1260.06 }   & \textbf{5371    }   \\
				\cline{3-9}
				                      &                       & 45  & 1062                    & 17580.82                & 65564               & 150                 & \textbf{5918.02 }   & \textbf{36704   }   \\
				\cline{2-9} \cline{3-9}
				                      & \multirow[]{2}{*}{70} & 35  & 5318                    & 31884.17                & 115348              & 254                 & \textbf{2313.39 }   & \textbf{14788   }   \\
				\cline{3-9}
				                      &                       & 52  & 905                     & 11075.91                & \textbf{31534   }   & 374                 & \textbf{6791.83 }   & 33511               \\
				\cline{2-9} \cline{3-9}
				                      & \multirow[]{3}{*}{80} & 20  & 87389                   & 87389                   & 87389               & 44                  & \textbf{44      }   & \textbf{44      }   \\
				\cline{3-9}
				                      &                       & 40  & 4078                    & 22124.56                & 46693               & 204                 & \textbf{3810.09 }   & \textbf{23907   }   \\
				\cline{3-9}
				                      &                       & 60  & 651                     & 8956.86                 & \textbf{23618   }   & 17                  & \textbf{7103.34 }   & 30108               \\
				\cline{2-9} \cline{3-9} \bottomrule
				\multirow[]{9}{*}{4}  & 40                    & 30  & 7650                    & \textbf{51219.26}       & \textbf{119710  }   & 2171                & 134821.12           & 260359              \\
				\cline{2-9} \cline{3-9}
				                      & \multirow[]{2}{*}{50} & 25  & 33910                   & 61079.11                & \textbf{104747  }   & 505                 & \textbf{23021.56}   & 120436              \\
				\cline{3-9}
				                      &                       & 37  & 5323                    & \textbf{27257.72}       & \textbf{56562   }   & 1683                & 119503.60           & 287996              \\
				\cline{2-9} \cline{3-9}
				                      & \multirow[]{2}{*}{60} & 30  & 20591                   & \textbf{46902.17}       & \textbf{108168  }   & 270                 & 78885.80            & 263596              \\
				\cline{3-9}
				                      &                       & 45  & 1428                    & \textbf{11963.22}       & \textbf{34792   }   & 716                 & 105314.18           & 252623              \\
				\cline{2-9} \cline{3-9}
				                      & \multirow[]{2}{*}{70} & 35  & 10431                   & \textbf{25746.80}       & \textbf{62082   }   & 462                 & 97975.81            & 232447              \\
				\cline{3-9}
				                      &                       & 52  & 668                     & \textbf{8647.37 }       & \textbf{27620   }   & 2688                & 98116.84            & 281695              \\
				\cline{2-9} \cline{3-9}
				                      & \multirow[]{2}{*}{80} & 40  & 1951                    & \textbf{18563.06}       & \textbf{64581   }   & 1523                & 94636.77            & 309197              \\
				\cline{3-9}
				                      &                       & 60  & 532                     & \textbf{6536.15 }       & \textbf{21029   }   & 7254                & 106843.60           & 294649              \\
				\cline{2-9} \cline{3-9}
				\bottomrule
			\end{tabular}
			\caption{Number of explored nodes on hard instances}
			\label{tab:NbNodes}
		\end{minipage}%
	\end{table}
\end{landscape}

\section{Conclusion}
\label{sec:CCL}

In this paper, we tackled a challenging bilevel parallel machine scheduling problem in an optimistic setting, i.e. when the follower returns among all its optimal solutions, the one that leads to the leader's optimal value. We have shown the $\mathcal{NP}$-hardness in a strong sense of the bilevel problem. We also developed several exact approaches, namely, a moderately exponential-time dynamic programming algorithm, a MIP formulation and a branch-and-bound algorithm using memorization mechanism in which we use column generation to compute a lower bound.

While the moderately exponential-time dynamic programming algorithm is only relevant for theoretical purposes, our experimental results demonstrate that the branch-and-bound algorithm outperforms the MIP on all instances, except for $N=80$ and $n=3N/4$ on four machines, but its performance deteriorates when dealing with more machines. Both approaches can efficiently solve all instances with up to 40 jobs and 2 or 4 machines.

These results show that the considered bilevel scheduling problem is very challenging and remains difficult to solve in practice, even for a dedicated exact algorithm. At first glance, this seems surprising as optimizing only the follower's objective function can be done in polynomial time. Moreover, the hardest scenario for the leader's problem is not the hardest for the bilevel problem. As a future research direction, we intend to develop heuristic approaches to tackle large instances. These heuristics could exploit the BaB algorithm as a beam-search heuristic or leverage machine learning techniques to approximate the follower's decisions.
\appendix

\section{Optimization of the pricing algorithm}\label{sec:PricingEqualSizeJobJippeAlgo}

In this section, we describe an algorithm with improved computational complexity compared to the dynamic programming approach for computing all possible schedules with negative reduced cost in the case of $n_g$ equal-sized jobs. Recall that this problem boils down to $1|p_j=p|\sum w_j'U_j$, where $w_j'=\min \{v_j-M, q_j-M\}$. To ease notation, we assume without loss of generality that $p=1$ and that we start at time 0 on the machine instead of starting at time $t$. To compensate for this, we put the due dates equal to
\[
	d_j \leftarrow \lfloor \frac{d_j-t'}{p}\rfloor;
\]
\noindent
rounding down the due dates does not change the problem, since the jobs must finish at fixed times $t$, which are limited to values within the set $\left\{1,2,\ldots,n_g \right\}$. For the remainder of this section, whenever we refer to $t$, it will represent any possible value from this predetermined set. We renumber the jobs in order of non-increasing $w'_j$ value; initially the set of on-time jobs is empty. We add the jobs one by one in order of index to the current set of on time jobs. If the job can be on time together with the already selected on-time jobs, then we put it on time. Otherwise, we mark the job as tardy and execute it at the end of the schedule if $q_j - M > 0$, and discard it if $q_j \leq M$. To test whether the next job $J_j$ can be set as on time, we need some an appropriate data structure. Any job with a due date smaller than or equal to zero will be marked as tardy immediately and not be considered as a candidate for the on time set.

We first focus on an efficient way to decide whether or not a job $J_j$ can be scheduled on time together with the already selected on-time jobs. For a given set of on-time jobs, we keep track of the number of jobs $n_t$ in the on time set that have a due date that is smaller than or equal to $t$, for all $t=1, \ldots , n_g$. It is easily verified that the jobs in the on time set can only be scheduled on time together if $n_t \leq t$, for all $t$. If we add job $J_j$ to the on-time set, then this implies that $n_t$ will increase by $1$ for all $t$ such as $t \geq d_j$ and remain equal for all $t$ with $t <d_j$. If all new $n_t$ values remain smaller than or equal to $t$, then job $J_j$ is added to the on-time set. Updating this in a straightforward manner takes $O({n_g})$ time, and leads to an $O({n_g}^2)$ algorithm. To speed up the process of updating the $n_t$ values, we compute the values $k_t=t-n_t$. The feasibility of the set of on-time jobs is guaranteed as long as $k_t \geqslant 0,\forall t$.

Determine for each $t=1, 2,\ldots , {n_g}$ the value $m_t$ as the minimum $k_{t^{\star}}$ value for all $t^{\star} \geq t$, and $t^{\star} \in \left\{1,2,\ldots,n_g \right\}$ that is, $m_t = \min\limits_{t^{\star} \geq t} k_{t^{\star}}$. If we plot the $m_t$ values as a function of $t$, then we would see a number of horizontal line segments that move up. We are only interested in the jump points $(m_t,t)$. Therefore, we store the $t$ values corresponding to a jump point and add a pointer to the predecessor and successor $t$ value for each jump point.

When we consider a new job $J_j$ with due date $d_j$, we first determine the largest value $t_1$ such that $t_1 \leqslant d_j < t_2$, where $t_1$ and $t_2$ correspond to jump points. Let $t_{0}$ be the value of the jump point that precedes $t_1$. Adding job $J_j$ to the set of on time jobs implies that $m_t$ is decreased by 1, $\forall t \geqslant t_1$, which is feasible if they all remain nonnegative. To avoid useless updates, we rather store the difference between two consecutive jump points, i.e. $(m_{t_{k+1}}-m_{t_k})$. Consequently, adding job $J_j$ only requires to decrease by 1 the value $(m_{t_1} - m _{t_0})$. If this difference becomes 0, then this means that $t_1$ does not correspond to a jump point anymore. This is easily taken care of by adjusting the pointers: the successor of $t_{0}$ becomes $t_2$ and the predecessor of $t_2$ becomes $t_{0}$.
We further mark job $J_j$ as on-time and add it to the on time set. If $t_1$ is the origin point, i.e., $t_{0}$ does not exist then it means that the job cannot be on time because $m_{t_1}=0$. At the end of the algorithm, we then process the jobs in the on-time set by increasing order of their due date.

The remaining point is how to efficiently compute the value $t_1$ for each job $J_j$. This can be done using
a disjoint set union find. This data structure supports taking the union of two segments and finding the segment to which a certain value belongs. Its running time is $\mathcal{O}(a(n))$, where $a(n)$ is the inverse of the Ackermann function \citep{tarjanWorstcaseAnalysisSet1984}. This is less than 5 for reasonable $n$ values.

We now focus on how to efficiently update the $w_j'$ when the value $M$ changes. We initially create two lists of the jobset: one has the jobs sorted by non-increasing order of the $v_j$'s while, in the other, the jobs are sorted by non-increasing order of the $(v_j - q_j)$'s. As $w_j'=\min(v_j-M,v_j-q_j)$ we compute them by a simple pass through the two lists. Notice that, due to the sorting of the two lists, the $w_j'$'s are computed in non-increasing order of their computed value. Consequently, the running time of computing and sorting the $w'_j$ decreases from $\mathcal{O}({n_g}\log({n_g}))$ to $\mathcal{O}({n_g})$.

So, the time complexity of this algorithm to compute a machine schedule, for a given value $M$, is $\mathcal{O}({n_g})$.

\section{Optimization of the Column Generation within the Branch and bound} \label{sec:OPTCGwithBaB}

Since Column Generation is used to provide a lower bound, and solving the master problem does not necessarily lead to obtaining a feasible solution of the bilevel problem, we can stop the column generation process earlier. Thereto, we compute at each iteration $k$ of the column generation a lower bound $LB(k)$ on the optimal value $z_{MP}^*$ of the master problem. Let $r_{\gamma+}$ and $r_{\gamma-}$ denote the outcome values of the pricing problem for a machine with speed
$V_{\gamma}$ that contains a maximum or minimum number of jobs. Then it is well known that
$LB(k) = z_{RMP}(k) + m_{0+}r_{0+} +m_{0-}r_{0-} + m_{1+}r_{1+} +m_{1-}r_{1-} \leq z_{MP}^*$, where $z_{RMP}(k)$ is the optimal value of the restricted master problem.
Thus, we have a global lower bound $LB=\max\limits_k LB(k)$. We stop the column generation at iteration $k$ when $z_{RMP}(k)-LB<1$, in this case the lower bound of a node is given by the weighted number of tardy jobs of the restricted master problem.

Then, when exploring a node in the branch-and-bound algorithm, the scheduling decisions for the jobs have already been made. Consequently, at a given node $\mathcal{N}_{child}$, the dynamic programming procedure begins with the last undecided set of jobs, $G_{g'}$, and the current total completion time, $t'$, for machine $M_i$, reflecting the previously made scheduling decisions. We then use these parameters, $(t', g')$, to compute the minimal reduced cost for machine $M_i$ at node $\mathcal{N}_{child}$. Furthermore, if $\mathcal{B}_1$ is processed on both high and low speed machines, then for the given node $N_{child}$, we might have already scheduled jobs on $\mathcal{B}_1$. Let $\overline{m}_{0+}$ denote the number of high speed machines with a maximum of jobs in the node $N_{child}$. Thus we add the constraint $\sum_{s \in S} b_{s0+} x_s \geqslant \overline{m}_{0+}$, which helps the master problem and gives better experimental results in practice. Remark, at the root node, if the number of jobs to schedule in the first block is greater than the number of low-speed machines, then we can have at least $\max\left(Q-m_1,0\right)=\overline{m}_{0+}$ high-speed machines with a maximum number of jobs.

Moreover, we reuse columns from parent nodes; however, as we explore the search tree using branch-and-bound, a subset of these columns becomes infeasible. Rather than removing each infeasible column immediately, we count the number of times this occurs and define a threshold $\epsilon$. If the proportion of all infeasible columns exceeds $\epsilon$, then we remove them.

Finally, in a search tree, when facing the subproblem of filling the last block, instead of exploring the tree further, we solve it as an assignment problem. Indeed, by the time we need to fill the last block, we already know the completion times of the machines and the jobs that can be scheduled in this last block. Therefore, we can formulate and solve the assignment problem, where our goal is to assign the remaining jobs to the last block in order to minimize the weighted number of tardy jobs.

\section*{Declarations}

\textbf{Conflict of interest} The authors declare that they have no conflict of interest.

\textbf{Ethical approval} This article does not contain any studies with human participants or animals performed by any of the authors.

\bibliographystyle{elsarticle-harv}
\bibliography{./biblio.bib}
\end{document}